\newtheorem{theorem}{Theorem}[section]
\newtheorem{lemma}[theorem]{Lemma}
\newtheorem{corollary}[theorem]{Corollary}
\newtheorem{question}[theorem]{Question}
\theoremstyle{definition}
\newtheorem{definition}[theorem]{Definition}
\newtheorem{proposition}[theorem]{Proposition}
\theoremstyle{remark}
\newtheorem{remark}[theorem]{Remark}
\begin{document}

\title[Submetrizability of strongly topological gyrogroups]
{Submetrizability of strongly topological gyrogroups}

\author{Meng Bao}
\address{(Meng Bao): 1. School of mathematics and statistics,
Minnan Normal University, Zhangzhou 363000, P. R. China; 2. College of Mathematics, Sichuan University, Chengdu 610064, P. R. China}
\email{mengbao95213@163.com}

\author{Fucai Lin*}
\address{(Fucai Lin): School of mathematics and statistics,
Minnan Normal University, Zhangzhou 363000, P. R. China}
\email{linfucai2008@aliyun.com; linfucai@mnnu.edu.cn}

\thanks{The authors are supported by the Key Program of the Natural Science Foundation of Fujian Province (No: 2020J02043), the NSFC (Nos. 11571158), the Program for New Century Excellent Talents in Fujian Province University, the Lab of Granular Computing, the Institute of Meteorological Big Data-Digital Fujian and Fujian Key Laboratory of Data Science and Statistics.\\
*corresponding author}

\keywords{strongly topological gyrogroups; completely regular; countable pseudocharacter; submetrizable; admissible $L$-subgyrogroup.}%insert keywords
\subjclass[2020]{Primary 54H11; 22A05; secondary 54A25; 54B15; 54E35.}%insert subject class

%\date{\today}
\begin{abstract}
Topological gyrogroups, with a weaker algebraic structure without associative law, have been investigated recently. We prove that each $T_{0}$-strongly topological gyrogroup is completely regular. We also prove that every $T_{0}$-strongly topological gyrogroup with a countable pseudocharacter admits a weaker metrizable topology. Finally, we prove that the left coset space $G/H$ admits a weaker metrizable topology if $H$ is an admissible $L$-subgyrogroup of a $T_{0}$-strongly topological gyrogroup $G$.
\end{abstract}

\maketitle

\section{Introduction}

The gyrogroup was firstly posed by A.A. Ungar in the study of $c$-ball of relativistically admissible velocities with Einstein velocity addition \cite{UA}.
The Einstein velocity addition $\oplus_{E}$ in the $c$-ball is given by the following equation
$$\mathbf{u}\oplus_{E}\mathbf{v}=\frac{1}{1+\frac{\langle \mathbf{u}, \mathbf{v}\rangle}{c^{2}}}\left\{\mathbf{u}+\frac{1}{\gamma_{\mathbf{u}}}\mathbf{v}+\frac{1}{c^{2}}\frac{\gamma_{\mathbf{u}}}{1+\gamma_{\mathbf{u}}}\langle\mathbf{u}, \mathbf{v}\rangle\mathbf{u}\right\},$$where $\mathbf{u}, \mathbf{v}\in\mathbb{R}_{c}^{3}=\{\mathbf{v}\in\mathbb{R}^{3}: \parallel\mathbf{v}\parallel<c\}$ and $\gamma_{\mathbf{u}}$ is the Lorentz factor given by $$\gamma_{\mathbf{u}}=\frac{1}{\sqrt{1-\frac{\parallel\mathbf{u}\parallel^{2}}{c^{2}}}}.$$ The system $(\mathbb{R}_{c}^{3},
\oplus_{E})$ does not form a group since $\oplus_{E}$ is neither associative nor commutative.
Then, this topic has been studied by many authors during the past few years and they have achieved a large plenty of results about gyrogroups, see \cite{FM,FM1,LF,LF1,LF2,SL,ST,ST1,ST2,UA2002}. In particular, W. Atiponrat \cite{AW} defined topological gyrogroups and proved that the separations of $T_{0}$ and $T_{3}$ are equivalent in topological gyrogroups; Z. Cai, S. Lin and W. He in \cite{CZ} proved that every topological gyrogroup is a rectifiable space. T. Suksumran \cite{ST} defined the normal subgyrogroup of a gyrogroup and showed that the quotient space with the specific operation is a gyrogroup. In 2019, M. Bao and F. Lin \cite{BL} defined the strongly topological gyrogroups and proved that every feathered strongly topological gyrogroup is paracompact, which implies that every feathered strongly topological gyrogroup is a $D$-space. Indeed, M\"{o}bius gyrogroups, Einstein gyrogroups and Proper velocity gyrogroups with the usual topologies, that were studied e.g. in \cite{FM, FM1,UA}, are all strongly topological gyrogroups. In 2020, W. Atiponrat and R. Maungchang \cite{AW1} showed that every regular paratopological gyrogroup equipped with some mild conditions is completely regular.

In this paper, we continue to study the strongly topological gyrogroups. We mainly show that $T_{0}$ strongly topological gyrogroups satisfy the conditions in \cite[Lemma 3.6]{AW1}, and it follows that every $T_{0}$ strongly topological gyrogroup is completely regular. What's more, every strongly topological gyrogroup with a countable pseudocharacter is submetrizable. Finally, we prove that if $H$ is an admissible $L$-subgyrogroup of a strongly topological gyrogroup $G$, then the left coset space $G/H$ is submetrizable. These results extend some well known results of topological groups.

\smallskip
\section{Preliminaries}
In this section, we introduce the necessary notations, terminologies and some facts about topological gyrogroups.

Throughout this paper, all topological spaces are assumed to be
$T_{0}$, unless otherwise is explicitly stated. Let $\mathbb{N}$ be the set of all positive integers and $\omega$ the first infinite ordinal. Let $X$ be a topological space $X$ and $A \subseteq X$ be a subset of $X$.
  The {\it closure} of $A$ in $X$ is denoted by $\overline{A}$ and the
  {\it interior} of $A$ in $X$ is denoted by $\mbox{Int}(A)$. The readers may consult \cite{AA, E, linbook} for notation and terminology not explicitly given here.

\begin{definition}\cite{AW}
Let $G$ be a nonempty set, and let $\oplus: G\times G\rightarrow G$ be a binary operation on $G$. Then the pair $(G, \oplus)$ is called a {\it groupoid}. A function $f$ from a groupoid $(G_{1}, \oplus_{1})$ to a groupoid $(G_{2}, \oplus_{2})$ is called a {\it groupoid homomorphism} if $f(x\oplus_{1}y)=f(x)\oplus_{2} f(y)$ for any elements $x, y\in G_{1}$. Furthermore, a bijective groupoid homomorphism from a groupoid $(G, \oplus)$ to itself will be called a {\it groupoid automorphism}. We write $\mbox{Aut}(G, \oplus)$ for the set of all automorphisms of a groupoid $(G, \oplus)$.
\end{definition}

\begin{definition}\cite{UA}
Let $(G, \oplus)$ be a groupoid. The system $(G,\oplus)$ is called a {\it gyrogroup}, if its binary operation satisfies the following conditions:

\smallskip
(G1) There exists a unique identity element $0\in G$ such that $0\oplus a=a=a\oplus0$ for all $a\in G$.

\smallskip
(G2) For each $x\in G$, there exists a unique inverse element $\ominus x\in G$ such that $\ominus x \oplus x=0=x\oplus (\ominus x)$.

\smallskip
(G3) For all $x, y\in G$, there exists $\mbox{gyr}[x, y]\in \mbox{Aut}(G, \oplus)$ with the property that $x\oplus (y\oplus z)=(x\oplus y)\oplus \mbox{gyr}[x, y](z)$ for all $z\in G$.

\smallskip
(G4) For any $x, y\in G$, $\mbox{gyr}[x\oplus y, y]=\mbox{gyr}[x, y]$.
\end{definition}

Notice that a group is a gyrogroup $(G,\oplus)$ such that $\mbox{gyr}[x,y]$ is the identity function for all $x, y\in G$. The definition of a subgyrogroup is as follows.

\begin{definition}\cite{ST}
Let $(G,\oplus)$ be a gyrogroup. A nonempty subset $H$ of $G$ is called a {\it subgyrogroup}, denoted
by $H\leq G$, if the following statements hold:

\smallskip
(i) The restriction $\oplus| _{H\times H}$ is a binary operation on $H$, i.e. $(H, \oplus| _{H\times H})$ is a groupoid.

\smallskip
(ii) For any $x, y\in H$, the restriction of $\mbox{gyr}[x, y]$ to $H$, $\mbox{gyr}[x, y]|_{H}$ : $H\rightarrow \mbox{gyr}[x, y](H)$, is a bijective homomorphism.

\smallskip
(iii) $(H, \oplus|_{H\times H})$ is a gyrogroup.

\smallskip
Furthermore, a subgyrogroup $H$ of $G$ is said to be an {\it $L$-subgyrogroup} \cite{ST}, denoted
by $H\leq_{L} G$, if $\mbox{gyr}[a, h](H)=H$ for all $a\in G$ and $h\in H$.

\end{definition}

\begin{definition}\cite{AW}
A triple $(G, \tau, \oplus)$ is called a {\it topological gyrogroup} if the following statements hold:

\smallskip
(1) $(G, \tau)$ is a topological space.

\smallskip
(2) $(G, \oplus)$ is a gyrogroup.

\smallskip
(3) The binary operation $\oplus: G\times G\rightarrow G$ is jointly continuous while $G\times G$ is endowed with the product topology, and the operation of taking the inverse $\ominus (\cdot): G\rightarrow G$, i.e. $x\rightarrow \ominus x$, is also continuous.
\end{definition}

\begin{definition}\cite{BL}
Let $G$ be a topological gyrogroup. We say that $G$ is a {\it strongly topological gyrogroup} if there exists a neighborhood base $\mathscr U$ of $0$ such that, for every $U\in \mathscr U$, $\mbox{gyr}[x, y](U)=U$ for any $x, y\in G$. For convenience, we say that $G$ is a strongly topological gyrogroup with neighborhood base $\mathscr U$ of $0$. Clearly, we may assume that $U$ is symmetric for each $U\in\mathscr U$.
\end{definition}

\begin{remark}
By \cite[Example 3.1]{BL}, there exists a strongly topological gyrogroup which is not a topological group. What's more, the authors gave an example in \cite{BL} to show that there exists a strongly topological gyrogroup which has an infinite $L$-subgyrogroup, see \cite[Example 3.2]{BL}.
\end{remark}

\begin{definition}\cite{E}
A continuous mapping $f: X\rightarrow Y$ is called {\it closed} (resp. {\it open}) if for every closed (resp. open) set $A\subset X$ the image $f(A)$ is closed (resp. {\it open}) in $Y$.
\end{definition}

\begin{definition}\cite{E}
Let $f$ be a function from $X$ to $Y$. The {\it image} of the set $A\subset X$ under $f$ is the set $$f(A)=\{y\in Y: y=f(x)\ \mbox{for}\ \mbox{some}~~x\in A\},$$ and the {\it inverse image} of the set $B\subset Y$ under $f$ is the set $$f^{-1}(B)=\{x\in X: f(x)\in B\};$$ inverse images of one-point sets under $f$ are called {\it fibers} of $f$.
\end{definition}

We recall the following concept of the coset space of a topological gyrogroup.

Let $(G, \tau, \oplus)$ be a topological gyrogroup and $H$ an $L$-subgyrogroup of $G$. It follows from \cite[Theorem 20]{ST} that $G/H=\{a\oplus H:a\in G\}$ is a partition of $G$. We denote by $\pi$ the mapping $a\mapsto a\oplus H$ from $G$ onto $G/H$. Clearly, for each $a\in G$, we have $\pi^{-1}(\pi(a))=a\oplus H$ (that is, the fiber of the point $\pi(a)$). Indeed, for any $a\in G$ and $h\in H$,
\begin{eqnarray}
(a\oplus h)\oplus H&=&a\oplus (h\oplus gyr[h,a](H))\nonumber\\
&=&a\oplus (h\oplus gyr^{-1}[a,h](H))\nonumber\\
&=&a\oplus (h\oplus H)\nonumber\\
&=&a\oplus H\nonumber
\end{eqnarray}

Denote by $\tau (G)$ the topology of $G$. In the set $G/H$, we define a family $\tau (G/H)$ of subsets as follows: $$\tau (G/H)=\{O\subset G/H: \pi^{-1}(O)\in \tau (G)\}.$$

Finally, we give some facts about gyrogroups and topological gyrogroups, which are important in our proofs.

\begin{proposition}\cite{ST}.
Let $(G, \oplus)$ be a gyrogroup, and let $H$ be a nonempty subset of $G$. Then $H$ is a subgyrogroup if and only if the following statements are true:

\smallskip
(1) For any $x\in H$, $\ominus x\in H$.

\smallskip
(2) For any $x, y\in H$, $x\oplus y\in H$.
\end{proposition}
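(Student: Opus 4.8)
The plan is to prove both implications, treating the statement as a gyrogroup analogue of the classical two-step subgroup criterion. The structural facts I will lean on throughout are the gyration formula $\mbox{gyr}[x,y](z)=\ominus(x\oplus y)\oplus\big(x\oplus(y\oplus z)\big)$, which comes from the left gyroassociative law (G3) by left cancellation, the left cancellation law itself, and the gyration inversion law $\mbox{gyr}^{-1}[x,y]=\mbox{gyr}[y,x]$ that is already used implicitly in the coset computation preceding this statement.

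For the necessity direction, suppose $H$ is a subgyrogroup. Condition (2) is just clause (i) of the definition. For condition (1) I would first reconcile the algebraic data of $H$ with that of $G$: letting $0_{H}$ denote the identity of the gyrogroup $(H,\oplus|_{H\times H})$, the equalities $0_{H}\oplus 0_{H}=0_{H}=0_{H}\oplus 0$ hold in $G$, so left cancellation forces $0_{H}=0$. Then for $x\in H$ its $H$-inverse $y$ satisfies $y\oplus x=0=x\oplus y$ in $G$, so the uniqueness clause (G2) gives $y=\ominus x$, whence $\ominus x\in H$.

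For the sufficiency direction I assume (1) and (2) and verify clauses (i)--(iii). Clause (i) is exactly (2). Clause (ii) is essentially automatic: since $\mbox{gyr}[x,y]$ is an automorphism of $(G,\oplus)$, its restriction to $H$ is an injective homomorphism, and because $H$ is closed under $\oplus$ by (2) the image $\mbox{gyr}[x,y](H)$ is a groupoid onto which the restriction is a bijective homomorphism. The substance is clause (iii). Choosing any $x\in H$ (using nonemptiness), (1) gives $\ominus x\in H$ and then (2) gives $0=x\oplus(\ominus x)\in H$, so $0\in H$ is the identity (G1) and (1) supplies inverses (G2). The heart of the argument is to show each gyration stabilizes $H$: for $x,y,z\in H$ the gyration formula writes $\mbox{gyr}[x,y](z)$ as a string of operations and inversions that remain in $H$ by (1) and (2), giving $\mbox{gyr}[x,y](H)\subseteq H$; applying the same to $\mbox{gyr}[y,x]=\mbox{gyr}^{-1}[x,y]$ yields $H\subseteq\mbox{gyr}[x,y](H)$, so $\mbox{gyr}[x,y]|_{H}$ is a bijection of $H$ and hence an automorphism of $(H,\oplus|_{H\times H})$. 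Once the gyrations are known to preserve $H$, the gyroassociative law (G3) and the left loop property (G4) are inherited verbatim from $G$, so $(H,\oplus|_{H\times H})$ is a gyrogroup.

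I expect the main obstacle to be the surjectivity half of showing that $\mbox{gyr}[x,y]|_{H}$ maps $H$ \emph{onto} $H$. Closure under the operation and under inversion delivers only the inclusion $\mbox{gyr}[x,y](H)\subseteq H$ directly, and upgrading this to an equality genuinely requires the inversion identity $\mbox{gyr}^{-1}[x,y]=\mbox{gyr}[y,x]$ rather than any finiteness or cardinality shortcut; getting this step right is what makes the gyrations restrict to \emph{automorphisms} of $H$ and thereby makes clause (iii) go through.
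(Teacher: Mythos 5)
Your proof is correct. The paper itself contains no proof of this proposition (it is quoted from \cite{ST}), but your argument---necessity via left cancellation forcing $0_{H}=0$ and uniqueness of inverses in $G$, sufficiency via the identity $\mbox{gyr}[x,y](z)=\ominus(x\oplus y)\oplus\big(x\oplus(y\oplus z)\big)$ to obtain $\mbox{gyr}[x,y](H)\subseteq H$ and the inversion law $\mbox{gyr}^{-1}[x,y]=\mbox{gyr}[y,x]$ (which the paper itself uses implicitly in its coset computation) to upgrade the inclusion to equality---is exactly the standard proof in that reference, and you correctly single out the surjectivity of the restricted gyrations as the only non-routine step.
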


\begin{proposition}\cite{UA}\label{a}.
Let $(G, \oplus)$ be a gyrogroup. Then for any $x, y, z\in G$, we obtain the following:

\smallskip
(1) $(\ominus x)\oplus (x\oplus y)=y$.

\smallskip
(2) $(x\oplus (\ominus y))\oplus \mbox{gyr}[x, \ominus y](y)=x$.

\smallskip
(3) $(x\oplus \mbox{gyr}[x, y](\ominus y))\oplus y=x$.

\smallskip
(4) $\mbox{gyr}[x, y](z)=\ominus (x\oplus y)\oplus (x\oplus (y\oplus z))$.

\smallskip
(5) $(\ominus x\oplus y)\oplus \mbox{gyr}[\ominus x, b](\ominus y\oplus z)=\ominus x\oplus z$.

\smallskip
(6) $\ominus( x\oplus y)=\mbox{gyr}[x, y]( y\ominus x)$.

\smallskip
(7) $\ominus( x\oplus y)=\mbox{gyr}[x, y]( y\ominus x)$.
\end{proposition}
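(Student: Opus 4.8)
The plan is to derive all seven identities from the four gyrogroup axioms (G1)--(G4), using as the principal engine the left gyroassociative law (G3) together with the fact that every gyration $\mbox{gyr}[x,y]$ is an automorphism of $(G,\oplus)$ (in particular injective, fixing $0$, and commuting with $\ominus$). I would organize the argument so that the only genuinely non-mechanical ingredients---a small package of ``elementary'' gyration identities---are isolated at the start and then reused.

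First I would establish the left cancellation law, which is exactly item (1), since almost everything else rests on it. Writing $L_x\colon b\mapsto x\oplus b$, applying (G3) to $\ominus x\oplus(x\oplus b)$ gives $\ominus x\oplus(x\oplus b)=\mbox{gyr}[\ominus x,x](b)$; because $\mbox{gyr}[\ominus x,x]$ is injective this already shows each $L_x$ is injective. Feeding this injectivity back into the instance $x\oplus b=x\oplus\mbox{gyr}[0,x](b)$ of (G3) (leading term $0$) yields $\mbox{gyr}[0,x]=\mathrm{id}$, and then the loop property (G4) gives $\mbox{gyr}[\ominus x,x]=\mbox{gyr}[0,x]=\mathrm{id}$; substituting back proves (1). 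Item (4) is then just (G3) solved for the gyration: left-cancelling $x\oplus y$ in $(x\oplus y)\oplus\mbox{gyr}[x,y](z)=x\oplus(y\oplus z)$ gives $\mbox{gyr}[x,y](z)=\ominus(x\oplus y)\oplus(x\oplus(y\oplus z))$.

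The cancellation-type identities (2) and (5) would then fall out of single applications of (G3) combined with (1). For (2) I would apply (G3) to $x\oplus(\ominus y\oplus y)=x\oplus 0=x$; for (5) I would apply (G3) to $\ominus x\oplus(y\oplus(\ominus y\oplus z))$ and simplify the inner parenthesis by (1) to $z$ (reading the gyration subscript as $[\ominus x,y]$, correcting an apparent typo). For the ``inverse of a product'' identities (6) and (7)---whose intended right-hand side I read as $\mbox{gyr}[x,y](\ominus y\ominus x)$---I would simply verify that this candidate is a right inverse: $(x\oplus y)\oplus\mbox{gyr}[x,y](\ominus y\ominus x)=x\oplus(y\oplus(\ominus y\ominus x))=x\oplus\ominus x=0$ by (G3) and (1), whence it equals $\ominus(x\oplus y)$ by uniqueness of inverses (G2).

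The main obstacle is item (3), $(x\oplus\mbox{gyr}[x,y](\ominus y))\oplus y=x$, which is the right cancellation law in disguise. The naive use of (G3) with $z=\ominus y$ only produces the variant $(x\oplus y)\oplus\mbox{gyr}[x,y](\ominus y)=x$, and passing from this to (3) requires interchanging the two summands following $x$---a move that in a gyrogroup is not free and costs the full gyration machinery. I would supply that machinery by first proving the gyration inversion law $\mbox{gyr}[x,y]^{-1}=\mbox{gyr}[y,x]$ (via the loop property together with the identities already obtained), deducing the right gyroassociative law $(a\oplus b)\oplus c=a\oplus(b\oplus\mbox{gyr}[b,a](c))$, and then verifying (3) by checking that $\mbox{gyr}[x,y](\ominus y)\oplus\mbox{gyr}[\mbox{gyr}[x,y](\ominus y),x](y)=0$. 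Establishing the gyration inversion law cleanly, without circularity against (1)--(4), is the step I expect to require the most care.
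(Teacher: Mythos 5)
Your proofs of (1), (2), (4), (5), (6) and (7) are correct and complete, and your corrections of the statement's typos (the subscript $b$ in (5) should be $y$; the intended argument in (6)/(7) is $\ominus y\ominus x$; (7) merely repeats (6)) are the right readings. One tiny point of phrasing: your appeal to ``uniqueness of inverses (G2)'' in (6) should really be injectivity of the left translation $L_{x\oplus y}$, since (G2) speaks only of two-sided inverses; but you have that injectivity, so nothing is missing there. Bear in mind that the paper itself gives no proof of this proposition---it is quoted from Ungar's book---so your derivation must stand entirely on its own, which makes the one incomplete item decisive.

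Item (3) is genuinely not proven. Your plan is (i) establish the gyration inversion law $\mbox{gyr}[x,y]^{-1}=\mbox{gyr}[y,x]$, (ii) deduce right gyroassociativity $(a\oplus b)\oplus c=a\oplus(b\oplus \mbox{gyr}[b,a](c))$, and (iii) ``verify'' (3) by checking that $w\oplus \mbox{gyr}[w,x](y)=0$, where $w=\mbox{gyr}[x,y](\ominus y)$. But (iii) is not a verification: given (ii) and left cancellation, that displayed identity is \emph{equivalent} to (3), and it does not follow mechanically from anything you have established---it needs a nested-gyration identity of its own. Meanwhile (i), which you explicitly defer as the step ``requiring the most care,'' is itself a nontrivial theorem (in Ungar's development it comes after the gyrosum inversion law (6) and the even property $\mbox{gyr}[\ominus a,\ominus b]=\mbox{gyr}[a,b]$). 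So as written, (3) rests on two unestablished statements, one of which is (3) in disguise.

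The gap can be closed cleanly, and without the gyration inversion law at all. Put $w=\mbox{gyr}[x,y](\ominus y)$, $p=\ominus w=\mbox{gyr}[x,y](y)$ and $c=x\oplus y$. From (G3) with $z=\ominus y$ you already have the ``naive'' identity $c\oplus w=x$. Now apply your item (2) with the second element $p$ (so that its negative is $w$): $(x\oplus w)\oplus \mbox{gyr}[x,w](p)=x$. Since $x=c\oplus w$, the loop property (G4) gives $\mbox{gyr}[x,w]=\mbox{gyr}[c\oplus w,w]=\mbox{gyr}[c,w]$, and the gyrator identity (4) evaluates this gyration at $p$: $\mbox{gyr}[c,w](p)=\ominus(c\oplus w)\oplus(c\oplus(w\oplus p))=\ominus x\oplus(c\oplus 0)=\ominus x\oplus(x\oplus y)=y$, using $w\oplus p=0$, $c\oplus w=x$, and (1). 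Substituting this value into the previous display yields $(x\oplus w)\oplus y=x$, which is exactly (3).
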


\begin{proposition}\cite{AW}\label{m}
Let $(G, \tau, \oplus)$ be a topological gyrogroup, and let $U$ be a neighborhood of the identity $0$. Then the following three statements hold:
\begin{enumerate}
\item There is an open symmetric neighborhood $V$ of $0$ in $G$ such that $V\subset U$ and $V\oplus V\subset U$.

\smallskip
\item There is an open neighborhood $V$ of $0$ such that $\ominus V\subset U$.

\smallskip
\item If $A$ is a subset of $G$, $\overline{A}\subset W\oplus A$ for any neighborhood $W$ of the identity $0$.
\end{enumerate}
\end{proposition}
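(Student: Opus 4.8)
The plan is to treat the three assertions separately. The first two follow directly from the joint continuity of $\oplus$ and the continuity of $\ominus$, while the third is the gyrogroup analogue of the classical fact that $\overline{A}\subset WA$ in a topological group and rests on showing that right translations are homeomorphisms. Throughout I would use that $\ominus(\cdot)$ is an involutive homeomorphism of $G$: it is continuous by hypothesis, and since $\ominus(\ominus x)=x$ by (G2) it is its own inverse; in particular $\ominus W$ is a neighborhood of $0$ whenever $W$ is.

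For (1), joint continuity of $\oplus$ at $(0,0)$ and $0\oplus 0=0\in U$ give open neighborhoods $O_{1},O_{2}$ of $0$ with $O_{1}\oplus O_{2}\subset U$; putting $O=O_{1}\cap O_{2}$ we get $O\oplus O\subset U$, and $O\subset U$ comes for free since $o=o\oplus 0\in O\oplus O$ for each $o\in O$. Then $V=O\cap(\ominus O)$ is an open symmetric neighborhood of $0$ with $V\subset U$ and $V\oplus V\subset O\oplus O\subset U$. Assertion (2) is immediate from the continuity of $\ominus$ at $0$ together with $\ominus 0=0\in U$.

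The substantive point is (3), which I would reduce to the following claim: for each $a\in G$ the right translation $R_{a}\colon x\mapsto x\oplus a$ is a homeomorphism of $G$. Granting this, fix $x\in\overline{A}$ and a neighborhood $W$ of $0$; then $\ominus W$ is a neighborhood of $0$, so $R_{x}(\ominus W)=\{\ominus w\oplus x:w\in W\}$ is a neighborhood of $R_{x}(0)=x$. As $x\in\overline{A}$, this set meets $A$, yielding $w\in W$ and $a\in A$ with $a=\ominus w\oplus x$; then Proposition \ref{a}(1), in the form $w\oplus(\ominus w\oplus x)=x$, gives $x=w\oplus a\in W\oplus A$. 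To prove the claim I would first observe, from Proposition \ref{a}(4), that gyrations are jointly continuous, because
$$\mbox{gyr}[x,y](z)=\ominus(x\oplus y)\oplus\bigl(x\oplus(y\oplus z)\bigr)$$
exhibits $\mbox{gyr}[x,y](z)$ as a composite of $\oplus$ and $\ominus$. Since $G$ is a gyrogroup, $R_{a}$ is a bijection (the equation $x\oplus a=b$ having a unique solution), and Proposition \ref{a}(3) with $x:=b$ and $y:=a$ shows that the continuous map $S_{a}\colon b\mapsto b\oplus\mbox{gyr}[b,a](\ominus a)$ satisfies $R_{a}(S_{a}(b))=b$; hence $S_{a}=R_{a}^{-1}$ is continuous and $R_{a}$ is a homeomorphism.

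The main obstacle is exactly this claim. In a topological group right translation is trivially a homeomorphism, its inverse being again a right translation, whereas in a gyrogroup the solution of $x\oplus a=b$ is twisted by a gyration, so one needs both the continuity of gyrations and an explicit continuous formula for the right quotient; both are supplied by Proposition \ref{a}. Once right translations are known to be homeomorphisms, the remainder of (3) is just the gyrogroup transcription of the standard topological-group argument.
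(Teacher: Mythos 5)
Your proof is correct, but there is nothing in the paper to compare it against: Proposition \ref{m} is imported verbatim from Atiponrat's paper \cite{AW} (the paper invokes it, e.g.\ as \cite[Lemma 9]{AW} in the proof of Theorem \ref{t1}) and no proof of it appears in this paper at all. Judged on its own merits, your argument is sound. Parts (1) and (2) are routine consequences of continuity, as you say, and the real content is your claim that right translations $R_{a}\colon x\mapsto x\oplus a$ are homeomorphisms: joint continuity of gyrations via the formula $\mbox{gyr}[x,y](z)=\ominus(x\oplus y)\oplus(x\oplus(y\oplus z))$ of Proposition \ref{a}(4), plus the explicit continuous right inverse $S_{a}(b)=b\oplus\mbox{gyr}[b,a](\ominus a)$ from Proposition \ref{a}(3), and then the deduction of (3) by meeting $A$ with the neighborhood $(\ominus W)\oplus x$ of $x$ and applying left cancellation is exactly the gyrogroup transcription of the classical topological-group argument.

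One small improvement would make the argument fully self-contained. You quote the fact that $x\oplus a=b$ has a unique solution to get bijectivity of $R_{a}$; that is standard gyrogroup theory but is not among the facts listed in this paper. You can avoid it: by the loop property (G4), $\mbox{gyr}[x\oplus a,a]=\mbox{gyr}[x,a]$, so the right gyroassociative law (G3) gives
$$S_{a}(R_{a}(x))=(x\oplus a)\oplus\mbox{gyr}[x\oplus a,a](\ominus a)=(x\oplus a)\oplus\mbox{gyr}[x,a](\ominus a)=x\oplus(a\oplus(\ominus a))=x,$$
so $S_{a}$ is a two-sided inverse of $R_{a}$ and bijectivity comes for free from the axioms, rather than from an uncited cancellation theorem.
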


\begin{proposition}\label{l}\cite{AW}
Let $(G,\tau ,\oplus)$ be a topological gyrogroup, and let $A$ be a subgyrogroup of $G$. Then the followings are true:
\begin{enumerate}
\item If $\mbox{Int}(A)\not =\emptyset$, then $A$ is open.

\smallskip
\item If $A$ is open, then it is also closed.

\smallskip
\item $\overline{A}$ is a subgyrogroup of $G$.
\end{enumerate}
\end{proposition}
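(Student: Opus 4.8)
The three assertions are the gyrogroup counterparts of standard facts about subgroups of topological groups, and the plan is to mimic those proofs while replacing group translations and associativity by the gyrogroup identities recorded in Proposition \ref{a} and the neighborhood estimate in Proposition \ref{m}. The single structural fact I would isolate first is that for each $a\in G$ the left translation $L_{a}\colon x\mapsto a\oplus x$ is a homeomorphism of $G$: it is continuous because $\oplus$ is jointly continuous, and Proposition \ref{a}(1) shows that $L_{\ominus a}$ is a two-sided inverse, so $L_{a}^{-1}=L_{\ominus a}$ is continuous as well. Together with the continuity of $x\mapsto\ominus x$, this is essentially all the topology I need.

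For (1), I would fix $a_{0}\in\mathrm{Int}(A)$ and choose an open set $W$ with $a_{0}\in W\subseteq A$. Since $\ominus a_{0}\in A$ and $A$ is closed under $\oplus$, every point of $\ominus a_{0}\oplus W$ lies in $A$; as $L_{\ominus a_{0}}$ is a homeomorphism, $\ominus a_{0}\oplus W$ is an open neighborhood of $0$ contained in $A$, so $A$ is a neighborhood of $0$. Now for an arbitrary $a\in A$, the set $a\oplus(\ominus a_{0}\oplus W)=L_{a}(\ominus a_{0}\oplus W)$ is an open neighborhood of $a$, and it lies in $A$ because $a\in A$ and $A$ is a subgyrogroup. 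Hence every point of $A$ is interior, i.e. $A$ is open.

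Assertion (2) is the quickest: since $A$ is an open subgyrogroup, $A$ is itself an open neighborhood of $0$, so Proposition \ref{m}(3) (applied with $W=A$) gives $\overline{A}\subseteq A\oplus A\subseteq A$, whence $A=\overline{A}$ is closed. For (3) I would verify the two defining conditions of a subgyrogroup for $\overline{A}$ directly from continuity: because $x\mapsto\ominus x$ is continuous and $\ominus A=A$, we get $\ominus\overline{A}\subseteq\overline{\ominus A}=\overline{A}$; and because $\oplus$ is jointly continuous and $A\oplus A\subseteq A$, we get $\overline{A}\oplus\overline{A}\subseteq\overline{A\oplus A}\subseteq\overline{A}$, using $\overline{A}\times\overline{A}=\overline{A\times A}$ in the product topology. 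By the characterization of subgyrogroups these two inclusions show $\overline{A}\leq G$.

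The only place where the weaker gyrogroup axioms demand care is (1): one cannot translate an open set around by group cancellation, so the containment $a\oplus(\ominus a_{0}\oplus W)\subseteq A$ must be read off from the closure of $A$ under $\oplus$ and $\ominus$ rather than from associativity. I expect no genuine obstacle, since all coset-partition phenomena (which really require the $L$-subgyrogroup hypothesis) are avoided here: parts (2) and (3) use only the neighborhood estimate of Proposition \ref{m} and the continuity of the operations.
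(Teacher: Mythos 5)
Your proof is correct, but there is no in-paper proof to compare it against: Proposition \ref{l} is quoted from \cite{AW} as a preliminary fact, and the paper never proves it. Your argument is the natural one and checks out: left translations $L_{a}$ are homeomorphisms (for the two-sided inverse you implicitly also need $\ominus(\ominus a)=a$, which follows from uniqueness of inverses, not from Proposition \ref{a}(1) alone), which gives (1); the closure estimate of Proposition \ref{m}(3) applied with $W=A$ gives $\overline{A}\subset A\oplus A\subset A$, which gives (2); and continuity of $\oplus$ and $\ominus(\cdot)$ together with the two-condition subgyrogroup criterion (closure under $\ominus$ and $\oplus$) gives (3). Your closing remark is also apt: left cosets of a general subgyrogroup need not partition $G$ (the paper invokes the partition only for $L$-subgyrogroups), so the usual topological-group argument for (2) via ``the complement is a union of open cosets'' is unavailable, and the closure estimate you use is exactly the right substitute.
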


\section{Complete regularity of strongly topological gyrogroups}
In this section, we mainly prove that every $T_{0}$-strongly topological gyrogroup is completely regular.
In \cite{AW}, W. Atiponrat proved that the separation axioms $T_{0}$ and $T_{3}$ are equivalent for a topological gyrogroup and posed the following question.

\begin{question}\cite{AW}
Is every Hausdorff topological gyrogroup completely regular?
\end{question}

It is clear that every strongly topological gyrogroup is a topological gyrogroup. Hence, it is natural to pose the following question.

\begin{question}\label{q1}
Is every Hausdorff strongly topological gyrogroup completely regular?
\end{question}

Next we prove that each $T_{0}$-strongly topological gyrogroup is completely regular, which gives an affirmative answer to Question \ref{q1}, see Theorem \ref{t1}. In \cite{AW1}, the authors defined a micro-associative paratopological gyrogroup and showed that a micro-associative regular paratopological gyrogroup with additional conditions is completely regular. We recall this concept as follows.

\begin{definition}\cite{AW1}
We call a paratopological gyrogroup $G$ is {\it micro-associative} if for any neighborhood $U\subset G$ of $0$, there are neighborhoods $W$ and $V$ of $0$ such that $W\subset V\subset U$ and $a\oplus (b\oplus V)=(a\oplus b)\oplus V$ for any $a,b\in W$.
\end{definition}

In \cite{CZ}, they introduced the concept of paratopological gyrogroup such that each topological gyrogroup is a paratopological gyrogroup.

\begin{lemma}\cite{AW1}\label{l1}
Let $(G,\tau ,\oplus)$ be a micro-associative Hausdorff paratopological gyrogroup such that for any neighborhoods $A$, $B$ of $0$, $$\overline{A}\subset \mbox{Int}(\overline{B\oplus A}).$$ Then $G$ is completely regular.
\end{lemma}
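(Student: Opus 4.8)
The plan is to run the classical Urysohn / Birkhoff--Kakutani construction of a separating function, letting the two hypotheses compensate for the two features that distinguish a paratopological gyrogroup from a topological group: the failure of associativity and the absence of a continuous inversion (hence of symmetric neighborhoods). First I would reduce to a single base point. For each $a\in G$ the left translation $L_{a}\colon x\mapsto a\oplus x$ is continuous, and by Proposition \ref{a}(1) its two--sided inverse is the left translation $L_{\ominus a}\colon y\mapsto \ominus a\oplus y$, which is again continuous; hence every $L_{a}$ is a homeomorphism and $G$ is topologically homogeneous. Consequently it suffices, for an arbitrary closed set $F$ with $0\notin F$, to produce a continuous $f\colon G\to[0,1]$ with $f(0)=0$ and $f\equiv 1$ on $F$. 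I fix an open neighborhood $U$ of $0$ with $U\cap F=\emptyset$, and carry out the whole construction inside $U$.

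Next I would build the scaffolding. Using joint continuity of $\oplus$ at $(0,0)$ I obtain, for each $n$, an open neighborhood $U_{n}$ of $0$ with $U_{0}\subset U$ and $U_{n+1}\oplus U_{n+1}\subset U_{n}$. Applying micro-associativity I may in addition arrange that the regrouping identity $a\oplus(b\oplus U_{n+1})=(a\oplus b)\oplus U_{n+1}$ holds whenever $a,b\in U_{n+1}$, so that all the iterated products appearing below are unambiguous as long as their partial products stay in the relevant $U_{k}$. For a dyadic rational $r=\sum_{i=1}^{k}c_{i}2^{-i}\in(0,1]$ with $c_{i}\in\{0,1\}$, I then define the open set $O_{r}$ as the left-to-right product of those $U_{i}$ for which $c_{i}=1$, so that $0\in O_{r}\subset U$ for every such $r$ and $O_{1}$ absorbs the whole chain.

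The heart of the argument, and the step I expect to be the main obstacle, is the monotonicity estimate: \emph{if $r<s$ are dyadic rationals in $(0,1]$, then $\overline{O_{r}}\subset O_{s}$}. Here both hypotheses are used simultaneously. The inclusion $O_{r}\oplus U_{m}\subset O_{s}$, for an index $m$ read off from the binary expansions of $r$ and $s$, is the gyrogroup analogue of the group relation $O_{r}\oplus O_{s-r}\subset O_{s}$, and it is exactly here that one normally regroups products using associativity; in a gyrogroup $a\oplus(b\oplus c)=(a\oplus b)\oplus\mbox{gyr}[a,b](c)$, so each regrouping threatens to introduce a gyration. Micro-associativity removes these gyrations locally, but only on the neighborhood where the identity is declared valid, so the real labor is an induction that verifies every partial product remains inside the appropriate $U_{k}$ before the regrouping is applied. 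To upgrade $O_{r}\oplus U_{m}\subset O_{s}$ to the closure statement I invoke the closure--interior hypothesis $\overline{A}\subset\mbox{Int}(\overline{B\oplus A})$: it yields $\overline{O_{r}}\subset\mbox{Int}(\overline{U_{m}\oplus O_{r}})\subset O_{s}$ after one further refinement of the chain, and this is precisely the substitute for the lemma $\overline{A}\subset V\oplus A$ that is unavailable without a continuous inverse.

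Finally I would set $f(x)=\inf\{r: x\in O_{r}\}$, with $f(x)=1$ when $x$ lies in no $O_{r}$. Then $f(0)=0$, since $0\in O_{r}$ for all $r$, and $f\equiv 1$ on $G\setminus U\supseteq F$. Continuity is the standard Urysohn verification resting on the monotonicity estimate: the set $\{x:f(x)<t\}=\bigcup_{r<t}O_{r}$ is open, while $\{x:f(x)\le t\}=\bigcap_{r>t}O_{r}=\bigcap_{r>t}\overline{O_{r}}$ is closed, the second equality being immediate from $\overline{O_{r'}}\subset O_{r}$ whenever $r'<r$. Hence preimages of $[0,t)$ and $(t,1]$ are open for every $t$, so $f$ is continuous and witnesses the complete regularity of $G$.
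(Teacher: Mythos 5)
Your attempt cannot be checked against a proof in this paper, because the paper does not prove Lemma~\ref{l1} at all: it is quoted from \cite{AW1} and used as a black box. Judged on its own merits, your sketch follows the right general strategy (homogeneity via left translations, which is fine since $\mbox{gyr}[a,\ominus a]$ is the identity, then a Urysohn-type dyadic construction), but it has a genuine gap at exactly the step you identify as the heart of the argument. Your monotonicity claim is $\overline{O_{r}}\subset O_{s}$, obtained as $\overline{O_{r}}\subset \mbox{Int}(\overline{U_{m}\oplus O_{r}})\subset O_{s}$. The first inclusion is a legitimate use of the hypothesis, but the second is unjustified: from $U_{m}\oplus O_{r}\subset O_{s}$ you can only conclude $\mbox{Int}(\overline{U_{m}\oplus O_{r}})\subset \mbox{Int}(\overline{O_{s}})$, and there is no tool for stripping the closure off the right-hand side. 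In a paratopological gyrogroup inversion need not be continuous, so the inclusion $\overline{A}\subset W\oplus A$ (Proposition~\ref{m}(3), valid in topological gyrogroups) is unavailable---that unavailability is the entire reason the lemma's hypothesis carries its peculiar closure--interior form. Your ``one further refinement of the chain'' is circular: inserting an intermediate dyadic $s'$ with $U_{m}\oplus O_{r}\subset O_{s'}$ and $\overline{O_{s'}}\subset O_{s}$ presupposes the very monotonicity being proved. The repair is to run the Urysohn argument on the closed sets $\overline{O_{r}}$ with the weaker nesting $\overline{O_{r}}\subset \mbox{Int}(\overline{O_{s}})$ for $r<s$, which your inclusions do yield and which suffices; but then your final continuity verification, which rests on $\{x: f(x)\le t\}=\bigcap_{r>t}O_{r}=\bigcap_{r>t}\overline{O_{r}}$, must be rewritten, since it uses the unproved form $\overline{O_{r'}}\subset O_{r}$.

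A second, independent gap is your use of micro-associativity. The definition supplies, inside a given $U$, two neighborhoods $W\subset V\subset U$ with $a\oplus (b\oplus V)=(a\oplus b)\oplus V$ for $a,b\in W$: the multipliers range over the \emph{smaller} set, the translated set is the \emph{larger} one, and the identity holds only for that particular $V$. Your ``arranged'' identity $a\oplus (b\oplus U_{n+1})=(a\oplus b)\oplus U_{n+1}$ for $a,b\in U_{n+1}$ conflates $W$ with $V$, and---worse---in the induction proving $U_{m}\oplus O_{r}\subset O_{s}$ the set that must be regrouped past $a\oplus(b\oplus\cdot)$ is not a basic $U_{k}$ but the long product $O_{r}$, to which the micro-associativity identity does not apply as stated. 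So the induction you explicitly defer (``the real labor'') is not a routine verification; it requires reorganizing the construction around two interleaved scales of neighborhoods and element-wise regrouping, and it is the actual content of the proof in \cite{AW1}. Relatedly, you write $O_{r}\oplus U_{m}$ in one place and $U_{m}\oplus O_{r}$ in another; with the hypothesis in the form $\overline{A}\subset \mbox{Int}(\overline{B\oplus A})$ the new factor must multiply on the left, and in a nonassociative, noncommutative structure these are not interchangeable.
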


Next, we show that every $T_{0}$-strongly topological gyrogroup $G$ is a micro-associative Hausdorff topological gyrogroup and satisfies the additional condition in Lemma~\ref{l1}, thus $G$ is completely regular.

\begin{theorem}\label{t1}
Every $T_{0}$-strongly topological gyrogroup is completely regular.
\end{theorem}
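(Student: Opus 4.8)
The plan is to verify that a $T_{0}$-strongly topological gyrogroup $(G,\tau,\oplus)$ satisfies every hypothesis of Lemma~\ref{l1} and then invoke it. Since every strongly topological gyrogroup is a topological gyrogroup and every topological gyrogroup is a paratopological gyrogroup, $G$ is a paratopological gyrogroup; and being $T_{0}$, it is $T_{3}$, hence Hausdorff, by Atiponrat's equivalence of $T_{0}$ and $T_{3}$ for topological gyrogroups \cite{AW}. Thus it remains to check two things: that $G$ is micro-associative, and that $\overline{A}\subset\mbox{Int}(\overline{B\oplus A})$ for all neighborhoods $A,B$ of $0$.

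For micro-associativity I would exploit the distinguished base $\mathscr U$. Given a neighborhood $U$ of $0$, choose $V\in\mathscr U$ with $V\subset U$. For any $a,b\in G$ and any $z\in V$, the left gyroassociative law (G3) gives $a\oplus(b\oplus z)=(a\oplus b)\oplus\mbox{gyr}[a,b](z)$; since $\mbox{gyr}[a,b](V)=V$ by the definition of a strongly topological gyrogroup, letting $z$ range over $V$ yields $a\oplus(b\oplus V)=(a\oplus b)\oplus V$ for all $a,b\in G$. Taking $W=V$, the required neighborhoods $W\subset V\subset U$ are produced, so $G$ is micro-associative (in fact the identity holds for all $a,b\in G$, not merely on a small neighborhood).

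The heart of the argument is the inclusion $\overline{A}\subset\mbox{Int}(\overline{B\oplus A})$. I would first record that both left and right translations are homeomorphisms: $L_{v}\colon x\mapsto v\oplus x$ has continuous inverse $L_{\ominus v}$ by Proposition~\ref{a}(1), and $R_{a}\colon x\mapsto x\oplus a$ has the continuous inverse $x\mapsto x\oplus\mbox{gyr}[x,a](\ominus a)$ exhibited by Proposition~\ref{a}(3), the gyration being jointly continuous via the formula in Proposition~\ref{a}(4). Then, mimicking the classical topological-group argument, pick an open symmetric neighborhood $V$ of $0$ with $V\subset B$ (Proposition~\ref{m}(1)) and set $O=V\oplus\overline{A}=\bigcup_{x\in\overline{A}}R_{x}(V)$. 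This $O$ is open, being a union of translates of the open set $V$, and $\overline{A}\subset O$ since $0\in V$. Finally, for $v\in V$ and $x\in\overline{A}$ we have $v\oplus x\in L_{v}(\overline{A})=\overline{v\oplus A}\subset\overline{B\oplus A}$, using that $L_{v}$ is a homeomorphism and that $v\oplus A\subset B\oplus A$; hence $O\subset\overline{B\oplus A}$, and therefore $\overline{A}\subset O\subset\mbox{Int}(\overline{B\oplus A})$.

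With the three hypotheses in hand, Lemma~\ref{l1} yields that $G$ is completely regular. The main obstacle I anticipate is the containment condition, and within it the verification that right translations are homeomorphisms: this is where the non-associativity intrudes and must be absorbed through the gyration identities of Proposition~\ref{a}. Once the translations are known to be homeomorphisms the group-theoretic scheme transfers essentially verbatim, and it is worth noting that this step does not actually require the strong hypothesis; the strong hypothesis is instead exactly what powers the micro-associativity step.
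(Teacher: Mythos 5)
Your proof is correct and takes essentially the same route as the paper: both arguments verify the hypotheses of Lemma~\ref{l1}, obtaining micro-associativity immediately from the gyration-invariance of the base $\mathscr U$ (your observation that the identity $a\oplus(b\oplus V)=(a\oplus b)\oplus V$ holds for all $a,b\in G$ is a harmless strengthening of the paper's restriction to $a,b\in U_{1}$). The only difference is cosmetic: where the paper dispatches the containment condition by citing \cite[Lemma 9]{AW} (Proposition~\ref{m}(3)) together with the openness of $V\oplus U$, you reprove that containment from scratch via the left and right translation homeomorphisms, which is exactly the content of the cited lemma.
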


\begin{proof}
Let $G$ be a $T_{0}$-strongly topological gyrogroup with a symmetric neighborhood base $\mathscr U$ of $0$. Therefore, for every $U\in \mathscr U$, we have $\mbox{gyr}[x, y](U)=U$ for any $x,y\in G$. For any neighborhood $O\subset G$ of $0$, there are $U_{1}, U_{2}\in \mathscr U$ such that $U_{1}\subset U_{2}\subset O$. Moreover, for every $a, b\in U_{1}$, $$a\oplus (b\oplus U_{2})=(a\oplus b)\oplus \mbox{gyr}[a, b](U_{2})=(a\oplus b)\oplus U_{2}.$$ Therefore, a strongly topological gyrogroup is a micro-associative topological gyrogroup. Let $U, V$ be any neighborhoods of $0$ in $G$. It follows from \cite[Lemma 9]{AW} that $\overline{U}\subset V\oplus U\subset \mbox{Int}(\overline{V\oplus U})$. Hence, by Lemma \ref{l1}, we have that a $T_{0}$-strongly topological gyrogroup is completely regular.
\end{proof}

\section{Submetrizability of a strongly topological gyrogroups}
In this section, we mainly discuss the submetrizabilities of a strongly topological gyrogroup. A space $X$ is {\it submetrizable} if there exists a continuous one-to-one mapping of $X$ onto a metrizable space. Moreover, a space $X$ is said to be of {\it countable pseudocharacter} if, for every $x\in X$, there exists a sequence of open subsets $\{U_{n}(x)\}_{n\in\mathbb{N}}$ such that $\{x\}=\bigcap_{n\in\mathbb{N}}U_{n}(x)$. It is well known that every topological group with a countable pseudocharacter is submetrizable. In \cite{BL}, the authors posed the following question.

\begin{question}\label{wt1}
Let $(G, \tau, \oplus)$ be a topological gyrogroup with a countable pseudocharacter. Is $G$ submetrizable? What if the topological gyrogroup is a strongly topological gyrogroup?
\end{question}

Next we prove that a strongly topological gyrogroup with a countable pseudocharacter is submetrizable, which gives an affirmative answer to Question \ref{wt1} when the topological gyrogroup is a strongly topological gyrogroup, see Theorem \ref{dl3}. First, we recall an important lemma in \cite{BL}.

\begin{lemma}\label{s}\cite{BL}
Let $G$ be a strongly topological gyrogroup with the symmetric neighborhood base $\mathscr{U}$ at $0$, and let $\{U_{n}: n\in\mathbb{N}\}$ and $\{V(m/2^{n}): n, m\in\mathbb{N}\}$ be two sequences of open neighborhoods satisfying the following conditions (1)-(5):

\smallskip
(1) $U_{n}\in\mathscr{U}$ for each $n\in \mathbb{N}$.

\smallskip
(2) $U_{n+1}\oplus U_{n+1}\subset U_{n}$, for each $n\in \mathbb{N}$.

\smallskip
(3) $V(1)=U_{0}$;

\smallskip
(4) For any $n\geq 1$, put $$V(1/2^{n})=U_{n}, V(2m/2^{n})=V(m/2^{n-1})$$ for $m=1,...,2^{n-1}$, and $$V((2m+1)/2^{n})=U_{n}\oplus V(m/2^{n-1})=V(1/2^{n})\oplus V(m/2^{n-1})$$ for each $m=1,...,2^{n-1}-1$;

\smallskip
(5) $V(m/2^{n})=G$ when $m>2^{n}$;

\smallskip
Then there exists a prenorm $N$ on $G$ that satisfies the following conditions:

\smallskip
(a) for any fixed $x, y\in G$, we have $N(\mbox{gyr}[x,y](z))=N(z)$ for any $z\in G$;

\smallskip
(b) for any $n\in \mathbb{N}$, $$\{x\in G: N(x)<1/2^{n}\}\subset U_{n}\subset\{x\in G: N(x)\leq 2/2^{n}\}.$$
\end{lemma}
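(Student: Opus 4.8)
The plan is to adapt the classical Birkhoff--Kakutani construction of a prenorm from a monotone chain of neighborhoods, with the non-associativity and non-commutativity of $\oplus$ absorbed by the gyration-invariance of the basic sets. The first preparatory step is to verify, by induction on $n$ using the recursion in (4), that \emph{every} dyadic neighborhood $V(m/2^n)$ is gyration-invariant. This is immediate from the fact that each $\mbox{gyr}[a,b]$ is a groupoid automorphism, so $\mbox{gyr}[a,b](A\oplus B)=\mbox{gyr}[a,b](A)\oplus\mbox{gyr}[a,b](B)$; combining this with the hypothesis $\mbox{gyr}[a,b](U_n)=U_n$ and the inductive hypothesis on $V(m/2^{n-1})$ gives $\mbox{gyr}[a,b](V((2m+1)/2^n))=V((2m+1)/2^n)$, while the even-indexed and trivial cases are clear. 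The monotonicity $V(r)\subseteq V(s)$ for dyadic $r\le s$, which I will also need, follows from (2) and (4) by a parallel induction.

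Next I would introduce the auxiliary function $f\colon G\to[0,1]$ defined by $f(z)=\inf\{m/2^n:z\in V(m/2^n)\}$. Condition (5) guarantees that the defining set is nonempty and that $f(z)\le 1$, while $f(0)=0$. A first consequence of the gyration-invariance just established is that $f$ is gyration-invariant: since $z\in V(r)$ if and only if $\mbox{gyr}[a,b](z)\in V(r)$ for all $a,b$, we get $f(\mbox{gyr}[a,b](z))=f(z)$.

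The combinatorial core, and the step I expect to be the main obstacle, is the estimate
$$z_1\oplus(z_2\oplus(\cdots\oplus z_k))\in V\Big(2\sum_{i=1}^{k}f(z_i)\Big),$$
valid whenever $\sum_i f(z_i)$ is sufficiently small, which I would prove by induction on the chain length $k$ using condition (2) and the recursion (4). In a gyrogroup the obstruction is that the re-parenthesization needed in the inductive step forces the identity (G3), which inserts gyration factors $\mbox{gyr}[\cdot,\cdot]$; these are harmless precisely because each $V(m/2^n)$ is gyration-invariant, so every gyration applied to such a set can be discarded. The factor $2$ is the usual loss coming from the dyadic bookkeeping (the hypothesis (2) controls only two-fold, not three-fold, products) and is what ultimately produces the asymmetry between the two inclusions in (b). Setting $M(z)=\inf\{\sum_i f(z_i):z=z_1\oplus(z_2\oplus\cdots\oplus z_k)\}$ over right-nested decompositions, this estimate yields $f(z)\le 2\,M(z)$, while the trivial decomposition $k=1$ gives $M(z)\le f(z)$.

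Finally I would set $N(z)=\max\{M(z),M(\ominus z)\}$ and verify the prenorm axioms. Symmetry $N(\ominus z)=N(z)$ and $N(0)=0$ are immediate; the symmetrization by $\ominus$ is genuinely needed here because, in contrast to the gyration-invariance, non-commutativity prevents the $V(m/2^n)$ (hence $f$) from being symmetric. For subadditivity, $M(z\oplus w)\le M(z)+M(w)$ follows by concatenating decompositions, the intervening gyrations again being erased by the invariance of the $V(m/2^n)$; combining this with $\ominus(z\oplus w)=\mbox{gyr}[z,w](\ominus w\ominus z)$ from Proposition~\ref{a}(6) and the gyration-invariance of $M$ gives $M(\ominus(z\oplus w))=M(\ominus w\ominus z)\le M(\ominus w)+M(\ominus z)$, so both entries of the maximum defining $N(z\oplus w)$ are at most $N(z)+N(w)$. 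Property (a) follows since $M$ is gyration-invariant and automorphisms preserve $\ominus$, whence $N(\mbox{gyr}[a,b](z))=N(z)$. For (b), the inclusion $U_n\subseteq\{N\le 2/2^n\}$ uses $M\le f$ together with the symmetry of $U_n$ (so that $f(z),f(\ominus z)\le 1/2^n$ for $z\in U_n$), and the inclusion $\{N<1/2^n\}\subseteq U_n$ uses $f\le 2M\le 2N$ together with the monotonicity of the family $\{V(m/2^n)\}$; both then follow after the routine index bookkeeping.
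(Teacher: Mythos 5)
A preliminary remark: this paper contains no proof of the lemma at all --- it is imported wholesale from \cite{BL}, whose argument adapts the classical Birkhoff--Kakutani prenorm construction (Lemma 3.3.10 in the Arhangel'skii--Tkachenko book \cite{AA}) to strongly topological gyrogroups. Measured against that argument, the genuinely gyrogroup-specific parts of your plan are exactly right: the gyration-invariance of every $V(m/2^{n})$ (gyrations are automorphisms fixing each $U_{n}$), the use of that invariance to erase the gyration factors created by re-parenthesization via the left and right gyroassociative laws, the symmetrization by $\ominus$ together with Proposition~\ref{a}(6) to get subadditivity of the symmetrized function, and the fact that gyration-invariance passes to the final prenorm because automorphisms commute with $\ominus$. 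The chain-infimum $M$ is your own detour (the cited proof works with $f(x)=\inf\{r: x\in V(r)\}$ directly); it is a legitimate variant, since it makes subadditivity automatic.

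The genuine gap is that the entire weight of the proof rests on your ``combinatorial core'' $z_{1}\oplus(z_{2}\oplus(\cdots\oplus z_{k}))\in V\bigl(2\sum_{i}f(z_{i})\bigr)$, and the method you offer for it --- ``induction on the chain length $k$'' --- cannot carry it by itself: the case $k=2$ is already the whole classical difficulty, and no induction on $k$ reduces it to anything simpler. Note also that the Frink/Kelley-style chain-splitting argument (split the chain into two halves of weight $\le S/2$ around a middle element) needs \emph{triple} products $U_{n+1}\oplus U_{n+1}\oplus U_{n+1}\subset U_{n}$; condition (2) only yields triples after skipping indices, and running the chain lemma on the subsequence $U_{2n}$ pins $U_{2n}$ at level $2^{-n}$ rather than $2^{-2n}$, which no rescaling can convert into the level-by-level sandwich (b). What actually closes the argument is the absorption lemma $U_{n}\oplus V(m/2^{n})\subset V((m+1)/2^{n})$, proved by induction on $n$ (the even case is definition (4); the odd case uses gyroassociativity, invariance of $V(m/2^{n-1})$, and condition (2)); from it, unfolding the left factor $V(r)$ into its dyadic bits via (4) and absorbing them one at a time (with monotonicity supplying the rounding) gives the asymmetric two-factor bound $f(a\oplus b)\le 2f(a)+f(b)$, and only then does your induction on $k$ go through to give $f\le 2M$. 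This absorption lemma is the engine of the proof in \cite{BL} and appears nowhere in your sketch. A second, minor, slip: with $N=\max\{M,M\circ\ominus\}$ the estimate $f\le 2M$ only yields $\{x: N(x)<1/2^{n}\}\subset U_{n-1}$, not $\subset U_{n}$; you must rescale, e.g.\ replace $N$ by $2N$, which still satisfies $U_{n}\subset\{x: 2N(x)\le 2/2^{n}\}$, so (b) then holds exactly.
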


\begin{theorem}\label{dl3}
Suppose that $G$ is a strongly topological gyrogroup with the symmetric neighborhood base $\mathscr{U}$ at $0$. If $G$ is of countable pseudocharacter, then $G$ is submetrizable.
\end{theorem}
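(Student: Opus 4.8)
The plan is to exploit the countable pseudocharacter to build a sequence of neighborhoods meeting the hypotheses of Lemma~\ref{s}, and then to turn the gyro-invariant prenorm it produces into a continuous metric whose topology is coarser than $\tau$; the identity map onto the resulting metric space will witness submetrizability.

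First I would fix, using countable pseudocharacter, a decreasing sequence $\{W_{n}\}_{n\in\mathbb{N}}$ of open neighborhoods of $0$ with $\bigcap_{n}W_{n}=\{0\}$. I then construct $U_{n}\in\mathscr{U}$ recursively: take any $U_{0}\in\mathscr{U}$, and given $U_{n}$, apply Proposition~\ref{m}(1) to obtain an open symmetric $V$ with $V\oplus V\subset U_{n}\cap W_{n+1}$ and pick $U_{n+1}\in\mathscr{U}$ with $U_{n+1}\subset V$. This forces $U_{n+1}\oplus U_{n+1}\subset U_{n}$, so conditions (1) and (2) of Lemma~\ref{s} hold, while (3)--(5) merely set up the auxiliary family $\{V(m/2^{n})\}$. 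Since $0\in U_{n+1}$ we also get $U_{n+1}\subset U_{n+1}\oplus U_{n+1}\subset W_{n+1}$, whence $\bigcap_{n}U_{n}\subseteq\bigcap_{n}W_{n}=\{0\}$ and so $\bigcap_{n}U_{n}=\{0\}$. Lemma~\ref{s} then yields a prenorm $N$ with the gyro-invariance (a) $N(\mbox{gyr}[x,y](z))=N(z)$ and the estimate (b) $\{x:N(x)<1/2^{n}\}\subset U_{n}\subset\{x:N(x)\le 2/2^{n}\}$.

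Next I read two facts off (b). The right-hand inclusion shows $N$ is continuous at $0$, and hence, by joint continuity of $\oplus$ and continuity of $\ominus$, continuous on all of $G$; the left-hand inclusion shows that $N(x)=0$ forces $x\in\bigcap_{n}U_{n}=\{0\}$, so $N$ separates points. I then set $d(x,y)=N(\ominus x\oplus y)$ and claim $d$ is a continuous metric. Continuity is clear because $(x,y)\mapsto\ominus x\oplus y$ and $N$ are continuous, so in particular every $d$-ball is $\tau$-open. Moreover $d(x,y)=0$ iff $\ominus x\oplus y=0$ iff $x=y$, by the left cancellation law of Proposition~\ref{a}(1).

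The main work, and the step where the strong hypothesis is indispensable, is the symmetry and the triangle inequality, both of which hinge on the gyro-invariance (a). For symmetry I use the prenorm identity $N(\ominus z)=N(z)$ together with the gyrosum inversion law Proposition~\ref{a}(6) in the form $\ominus(\ominus x\oplus y)=\mbox{gyr}[\ominus x,y](\ominus y\oplus x)$, so that $N(\ominus x\oplus y)=N(\ominus(\ominus x\oplus y))=N(\ominus y\oplus x)$, i.e.\ $d(x,y)=d(y,x)$. For the triangle inequality I invoke Proposition~\ref{a}(5) in the form $\ominus x\oplus z=(\ominus x\oplus y)\oplus\mbox{gyr}[\ominus x,y](\ominus y\oplus z)$; subadditivity of the prenorm together with (a) then gives
$$N(\ominus x\oplus z)\le N(\ominus x\oplus y)+N(\mbox{gyr}[\ominus x,y](\ominus y\oplus z))=N(\ominus x\oplus y)+N(\ominus y\oplus z),$$
that is $d(x,z)\le d(x,y)+d(y,z)$. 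This is precisely the place where the gyro-invariant prenorm rescues the argument from the failure of associativity, so it is where I expect the real difficulty to sit. Letting $\tau_{d}$ denote the topology induced by $d$, the balls being $\tau$-open gives $\tau_{d}\subseteq\tau$, so the identity map from $(G,\tau)$ onto the metrizable space $(G,\tau_{d})$ is a continuous bijection; hence $G$ is submetrizable, which completes the proof.
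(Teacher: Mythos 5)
Your proof is correct and is essentially the paper's own argument: both extract the sequence $U_{n}\in\mathscr{U}$ from the pseudocharacter witnesses (with $U_{n+1}\oplus U_{n+1}\subset U_{n}$ and $\bigcap_{n}U_{n}=\{0\}$), invoke Lemma~\ref{s} to obtain the gyro-invariant prenorm $N$, and derive the triangle inequality from the identity $\ominus x\oplus z=(\ominus x\oplus y)\oplus\mbox{gyr}[\ominus x,y](\ominus y\oplus z)$ combined with the invariance property (a). The only divergence is that you take the one-sided $d(x,y)=N(\ominus x\oplus y)$ and deduce symmetry from gyrosum inversion plus the prenorm property $N(\ominus z)=N(z)$, whereas the paper works with the automatically symmetric $\varrho_{N}(x,y)=N(\ominus x\oplus y)+N(\ominus y\oplus x)$ and so never needs inversion-invariance of $N$, which is not listed among the stated conclusions (a)--(b) of Lemma~\ref{s} (though it is part of the standard definition of a prenorm built from symmetric sets).
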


\begin{proof}
Since the identity element $0$ is a $G_{\delta}$-point in $G$, there exists a sequence $\{W_{n}: n\in \mathbb{N}\}$ of open sets in $G$ such that $0=\bigcap _{n\in \mathbb{N}}W_{n}$. By induction, we obtain a sequence $\{U_{n}: n\in \mathbb{N}\}$ of symmetric open neighborhoods of $0$ such that $U_{n}\in \mathscr U$, $U_{n}\subset W_{n}$ and $U_{n+1}\oplus U_{n+1}\subset U_{n}$ for each $n\in \mathbb{N}$.

Apply Lemma \ref{s} to choose a continuous prenorm $N$ on $G$ which satisfies $$N(\mbox{gyr}[x, y](z))=N(z)$$ for any $x, y, z\in G$ and $$\{x\in G: N(x)<1/2^{n}\}\subset U_{n}\subset \{x\in G: N(x)\leq 2/2^{n}\},$$ for each integer $n\geq 0$.

Now, for arbitrary $x$ and $y$ in $G$, put $\varrho _{N}(x, y)=N(\ominus x\oplus y)+N(\ominus y\oplus x)$. Let us show that $\varrho _{N}$ is a metric on $G$.

\smallskip
(1) Clearly, $\varrho _{N}(x, y)=N(\ominus x\oplus y)+N(\ominus y\oplus x)\geq 0$, for every $x, y\in G$. At the same time, $\varrho _{N}(x, x)=N(0)+N(0)=0$, for each $x\in G$. Assume that $$\varrho _{N}(x, y)=N(\ominus x\oplus y)+N(\ominus y\oplus x)=0,$$ that is, $N(\ominus x\oplus y)=N(\ominus y\oplus x)=0$. Then, for each $n\in\mathbb{N}$, $$\ominus x\oplus y\in \{x\in G: N(x)<1/2^{n}\}\subset U_{n}\subset W_{n}$$ and so is the $\ominus y\oplus x$. Since $\{0\}=\bigcap _{n\in \mathbb{N}}W_{n}$, it follows that $\ominus x\oplus y=0=\ominus y\oplus x$, that is, $x=y$.

\smallskip
(2) For every $x, y\in G$, $\varrho _{N}(y, x)=N(\ominus y\oplus x)+N(\ominus x\oplus y)=\varrho _{N}(x, y)$.

\smallskip
(3) For every $x, y, z\in G$, it follows from \cite[Theorem 2.11]{UA2005} that
\begin{eqnarray}
\varrho _{N}(x, y)&=&N(\ominus x\oplus y)+N(\ominus y\oplus x)\nonumber\\
&=&N((\ominus x\oplus z)\oplus \mbox{gyr}[\ominus x, z](\ominus z\oplus y))\nonumber\\
&&+N((\ominus y\oplus z)\oplus \mbox{gyr}[\ominus y, z](\ominus z\oplus x))\nonumber\\
&\leq&N(\ominus x\oplus z)+N(\mbox{gyr}[\ominus x, z](\ominus z\oplus y))\nonumber\\
&&+N(\ominus y\oplus z)+N(\mbox{gyr}[\ominus y, z](\ominus z\oplus x))\nonumber\\
&=&N(\ominus x\oplus z)+N(\ominus z\oplus y)+N(\ominus y\oplus z)+N(\ominus z\oplus x)\nonumber\\
&=&\varrho _{N}(x, z)+\varrho _{N}(z, y)\nonumber
\end{eqnarray}

Thus, $\varrho _{N}$ is a metric on $G$.
Clearly, the topology generated by the metric $\varrho _{N}$ of $G$ is weaker than the original topology. Therefore, $G$ is submetrizable.
\end{proof}

Next we consider the coset space of a topological gyrogroup, and prove that the coset space $G/H$ of topological gyrogroup $G$, where $H$ is an admissible $L$-subgyrogroup, is submetrizable.

A subgyrogroup $H$ of a topological gyrogroup $G$ is called {\it admissible} if there exists a sequence $\{U_{n}:n\in \mathbb{N}\}$ of open symmetric neighborhoods of the identity $0$ in $G$ such that $U_{n+1}\oplus (U_{n+1}\oplus U_{n+1})\subset U_{n}$ for each $n\in \mathbb{N}$ and $H=\bigcap _{n\in \mathbb{N}}U_{n}$. If $G$ is a strongly topological gyrogroup with a symmetric neighborhood base $\mathscr U$ at $0$ and each $U_{n}\in \mathscr U$, we say that the admissible topological subgyrogroup is generated from $\mathscr U$. It was claimed in \cite{BL2} that the admissible topological subgyrogroup generated
from $\mathscr U$ of a strongly topological gyrogroup is a closed L-subgyrogroup. The following proposition shows that if the admissible subgyrogroup $H$ is generated from $\mathscr U$, then the coset space $G/H$ is a homogenous space.

\begin{proposition}
Let $(G, \tau, \oplus)$ be a strongly topological gyrogroup with a symmetric neighborhood base $\mathscr U$ at $0$. If $H$ is an admissible subgyrogroup generated from $\mathscr U$, then the coset space $G/H$ is a homogenous space.
\end{proposition}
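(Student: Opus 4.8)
The plan is to realize the homogeneity of $G/H$ through a family of self-homeomorphisms induced by the left translations of $G$. For each $a\in G$ let $L_a\colon G\to G$ be the left translation $L_a(x)=a\oplus x$. I will show that each $L_a$ descends to a homeomorphism $\bar{L}_a$ of $G/H$ satisfying $\bar{L}_a\circ\pi=\pi\circ L_a$, and that $\bar{L}_a$ sends the base point $\pi(0)=H$ to $\pi(a)=a\oplus H$. Granting this, homogeneity follows at once: for any two cosets $\pi(b)$ and $\pi(c)$ the composition $\bar{L}_c\circ\bar{L}_{\ominus b}$ is a homeomorphism of $G/H$ carrying $\pi(b)$ to $\pi(0)$ and then to $\pi(c)$.

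First I would check that $L_a$ is a homeomorphism of $G$. Continuity is immediate from the joint continuity of $\oplus$, and the left cancellation law, Proposition \ref{a}(1), gives $L_{\ominus a}\circ L_a=L_a\circ L_{\ominus a}=\mathrm{id}_G$, so $L_a$ is a bijection with continuous inverse $L_{\ominus a}$. The crucial step, and the one where the hypothesis that $H$ is generated from $\mathscr U$ really enters, is to show that $L_a$ carries cosets to cosets, namely $L_a(b\oplus H)=(a\oplus b)\oplus H$ for every $b\in G$. By axiom (G3) one has $a\oplus(b\oplus h)=(a\oplus b)\oplus\mathrm{gyr}[a,b](h)$, hence $a\oplus(b\oplus H)=(a\oplus b)\oplus\mathrm{gyr}[a,b](H)$. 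Since $H=\bigcap_{n}U_n$ with each $U_n\in\mathscr U$, and $G$ is strongly topological so that $\mathrm{gyr}[a,b](U_n)=U_n$ for all $n$, and since every gyroautomorphism is a bijection and therefore commutes with intersections, I obtain $\mathrm{gyr}[a,b](H)=\bigcap_{n}\mathrm{gyr}[a,b](U_n)=\bigcap_{n}U_n=H$, whence $L_a(b\oplus H)=(a\oplus b)\oplus H$.

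It follows that $L_a$ induces a well-defined map $\bar{L}_a\colon G/H\to G/H$ with $\bar{L}_a(b\oplus H)=(a\oplus b)\oplus H$ and $\bar{L}_a\circ\pi=\pi\circ L_a$. Because $\tau(G/H)$ is the quotient topology and $\bar{L}_a\circ\pi=\pi\circ L_a$ is continuous, the universal property of the quotient map $\pi$ yields the continuity of $\bar{L}_a$; applying the same reasoning to $\ominus a$ gives the continuity of $\bar{L}_{\ominus a}$, and left cancellation shows that $\bar{L}_a$ and $\bar{L}_{\ominus a}$ are mutually inverse, so each $\bar{L}_a$ is a homeomorphism of $G/H$. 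Finally $\bar{L}_a(\pi(0))=(a\oplus 0)\oplus H=a\oplus H=\pi(a)$, which completes the argument as indicated above.

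I expect the main obstacle to be precisely the verification that $\mathrm{gyr}[a,b](H)=H$ for \emph{arbitrary} $a,b\in G$. This is genuinely stronger than what the $L$-subgyrogroup axiom provides, since the latter only gives $\mathrm{gyr}[a,h](H)=H$ when the second slot $h$ lies in $H$ (as used earlier in the excerpt to identify the fibers of $\pi$). The full gyr-invariance needed to translate an \emph{arbitrary} coset $b\oplus H$ is supplied only by the strongly topological structure together with the assumption that $H$ is generated from $\mathscr U$; everything else in the proof is routine once this invariance is in hand.
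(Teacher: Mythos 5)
Your proof is correct, and its core coincides with the paper's: both arguments descend left translations to $G/H$, and both hinge on exactly the same key fact, established the same way, that $\mathrm{gyr}[a,b](H)=H$ for \emph{all} $a,b\in G$ because $H=\bigcap_{n}U_{n}$ with each $U_{n}\in\mathscr{U}$ fixed by every gyroautomorphism (you phrase this via injectivity commuting with intersections; the paper writes out the two inclusions). You differ from the paper in two points of execution, both of which are simplifications. First, you get continuity of the induced map $\bar{L}_{a}$ from the universal property of the quotient topology (since $\bar{L}_{a}\circ\pi=\pi\circ L_{a}$ is continuous and $\pi$ is a quotient map), whereas the paper argues with the basic open sets of $G/H$ supplied by \cite[Theorem 3.7]{BL} and the identity $h_{a}(\pi((x\oplus U)\oplus H))=\pi(a\oplus((x\oplus U)\oplus H))$. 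Second, for the transitivity step the paper exhibits a \emph{single} translation carrying $x\oplus H$ to $y\oplus H$, namely $h_{a}$ with $a=y\oplus\mathrm{gyr}[y,x](\ominus x)$, which requires the gyrogroup identity of Proposition \ref{a}(3) plus one further gyr-invariance verification; you instead route through the base point via the composition $\bar{L}_{c}\circ\bar{L}_{\ominus b}$, which needs nothing beyond left cancellation and the fact that a composition of homeomorphisms is a homeomorphism. Your route buys brevity and avoids the extra identity; the paper's route yields the slightly stronger (if unstated) conclusion that the left translations themselves, not just the group of homeomorphisms they generate, act transitively on $G/H$.
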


\begin{proof}
Since $H$ is an admissible subgyrogroup generated from $\mathscr U$, there exists a sequence $\{U_{n}: n\in \mathbb{N}\}\subset \mathscr U$ such that $U_{n+1}\oplus (U_{n+1}\oplus U_{n+1})\subset U_{n}$ for each $n\in \mathbb{N}$ and $H=\bigcap _{n\in \mathbb{N}}U_{n}$. Now we prove that the coset space $G/H$ is a homogenous space. Indeed, for any $a\in G$, define a mapping $h_{a}$ of
$G/H$ to itself by the rule $$h_{a}(x\oplus H)=(a\oplus x)\oplus H.$$ First, we prove that this definition is correct. Hence it suffices to prove that $(a\oplus x)\oplus H=(a\oplus y)\oplus H$ if $x\oplus H=y\oplus H$. Then it only need to prove $(a\oplus z)\oplus H=a\oplus (z\oplus H)$ for any $z\in G$. Take an arbitrary $z\in G$. Then
\begin{eqnarray}
(a\oplus z)\oplus H&=&a\oplus (z\oplus \mbox{gyr}[z, a](H))\nonumber\\
&=&a\oplus (x\oplus \mbox{gyr}[z, a](\bigcap _{n\in \mathbb{N}}U_{n}))\nonumber\\
&\subset &a\oplus (z\oplus \bigcap _{n\in \mathbb{N}}\mbox{gyr}[z, a](U_{n}))\nonumber\\
&=&a\oplus (z\oplus \bigcap _{n\in \mathbb{N}}U_{n})\nonumber\\
&=&a\oplus (z\oplus H).\nonumber
\end{eqnarray}
What's more, we have
\begin{eqnarray}
a\oplus (z\oplus H)&=&(a\oplus z)\oplus \mbox{gyr}[a, z](H)\nonumber\\
&=&(a\oplus z)\oplus \mbox{gyr}[a, z](\bigcap _{n\in \mathbb{N}}U_{n})\nonumber\\
&\subset &(a\oplus z)\oplus \bigcap _{n\in \mathbb{N}}\mbox{gyr}[a, z](U_{n})\nonumber\\
&=&(a\oplus z)\oplus \bigcap _{n\in \mathbb{N}}U_{n}\nonumber\\
&=&(a\oplus z)\oplus H.\nonumber
\end{eqnarray}

Therefore, $(a\oplus z)\oplus H=a\oplus (z\oplus H)$. Clearly, $h_{a}$ is a bijection. Next we prove that $h_{a}$
is a homeomorphism. This can be seen from the following argument.

Take any $x\oplus H\in G/H$ and any open basic neighbourhood $U$ of $0$ in $G$. Then it follows from \cite[Theorem 3.7]{BL} that $\pi((x\oplus U)\oplus H)$ is a basic neighbourhood of $x\oplus H$ in $G/H$. Similarly, the set $\pi(a\oplus ((x\oplus U)\oplus H))$ is a basic neighbourhood of
$a\oplus (x\oplus H)$ in $G/H$. Since $h_{a}(\pi((x\oplus U)\oplus H))=\pi(a\oplus ((x\oplus U)\oplus H))$, it easily verify that $h_{a}$ is a homeomorphism. Now, for any given $x\oplus H$ and $y\oplus H$ in
$G/H$, we can take $a=y\oplus \mbox{gyr}[y, x](\ominus x)$. We claim that $h_{a}(x\oplus H)=y\oplus H$. In fact, from the above proof, it follows that $\mbox{gyr}[y\oplus \mbox{gyr}[y, x](\ominus x), x](H)=H$. Since $H$ is an admissible subgyrogroup generated from $\mathscr U$,
\begin{eqnarray}
h_{a}(x\oplus H)&=&(y\oplus \mbox{gyr}[y, x](\ominus x))\oplus (x\oplus H)\nonumber\\
&=&((y\oplus \mbox{gyr}[y, x](\ominus x))\oplus x)\oplus \mbox{gyr}[y\oplus \mbox{gyr}[y, x](\ominus x), x](H)\nonumber\\
&=&y\oplus H.\nonumber
\end{eqnarray}
Hence, the quotient space $G/H$ is homogeneous.
\end{proof}

\begin{lemma}\label{3yl1}
Let $(G, \tau, \oplus)$ be a strongly topological gyrogroup with a symmetric neighborhood base $\mathscr U$ at $0$. Then:

\smallskip
(a) every admissible subgyrogroup $H$ of $G$ generated by $\mathscr U$ is closed in $G$ and the quotient space $G/H$ has countable pseudocharacter;

\smallskip
(b) every neighborhood of the identity $0$ in $G$ contains an admissible $L$-subgyrogroup;

\smallskip
(c) the intersection of countably many admissible $L$-subgyrogroups of $G$ is again an admissible $L$-subgyrogroup of $G$.
\end{lemma}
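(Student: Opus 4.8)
The plan is to prove the three parts separately, all resting on the two features of the generating chains: the condition $U_{n+1}\oplus(U_{n+1}\oplus U_{n+1})\subset U_n$ together with $0\in U_{n+1}$, which already yields $U_{n+1}\oplus U_{n+1}\subset U_n$ and $U_{n+1}\subset U_n$; and the fact that each $U_n\in\mathscr U$ is fixed by every gyration, so that $\mbox{gyr}[x,y]$, being an automorphism and hence a bijection, commutes with the intersection $\bigcap_n U_n$. For part (a), writing $H=\bigcap_n U_n$, I would first get closedness from Proposition \ref{m}(3) applied with $W=A=U_{n+1}$: this gives $\overline{U_{n+1}}\subset U_{n+1}\oplus U_{n+1}\subset U_n$, so $\overline H\subset\overline{U_{n+1}}\subset U_n$ for every $n$ and therefore $\overline H\subset\bigcap_n U_n=H$. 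For the countable pseudocharacter of $G/H$, I would use that $G/H$ is homogeneous by the preceding proposition, so it suffices to realize the single point $\pi(0)=H$ as a $G_\delta$. Each $\pi(U_n)$ is open because $\pi$ is an open map and the coset computation $(u\oplus h)\oplus H=u\oplus H$ identifies $\pi^{-1}(\pi(U_n))=U_n\oplus H$. Finally I would check $\bigcap_n\pi(U_n)=\{H\}$: if $\pi(x)\in\pi(U_n)$ then $x\in u_n\oplus H\subset U_n\oplus H\subset U_n\oplus U_n\subset U_{n-1}$ (using $H\subset U_n$ and the chain condition), whence $x\in\bigcap_n U_{n-1}=H$ and $\pi(x)=H$.

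For part (b), given a neighbourhood $O$ of $0$, I would construct the generating chain by recursion inside $O$. Choose a symmetric $U_0\in\mathscr U$ with $U_0\subset O$; having $U_n$, use joint continuity of $\oplus$ to find an open $V$ with $V\oplus(V\oplus V)\subset U_n$, then pick a symmetric $U_{n+1}\in\mathscr U$ with $U_{n+1}\subset V$, so that $U_{n+1}\oplus(U_{n+1}\oplus U_{n+1})\subset U_n$. Put $H=\bigcap_n U_n\subset U_0\subset O$. I would then verify, via the subgyrogroup criterion (closure under $\ominus$ and $\oplus$), that $H$ is a subgyrogroup: symmetry of the $U_n$ handles inverses, while $x,y\in H$ forces $x\oplus y\in U_{n+1}\oplus U_{n+1}\subset U_n$ for all $n$. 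By construction $H$ is admissible and generated from $\mathscr U$, and it is an $L$-subgyrogroup because $\mbox{gyr}[x,y](H)=\bigcap_n\mbox{gyr}[x,y](U_n)=\bigcap_n U_n=H$ for all $x,y\in G$.

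For part (c), let $\{H_k\}_{k\in\mathbb N}$ be admissible $L$-subgyrogroups with generating chains $\{U_n^{(k)}\}_n$. The key device is the diagonal chain $W_n=\bigcap_{k\le n}U_n^{(k)}$, each $W_n$ being open, symmetric and a neighbourhood of $0$. The telescoping condition holds because for each $k\le n+1$ one has $W_{n+1}\oplus(W_{n+1}\oplus W_{n+1})\subset U_{n+1}^{(k)}\oplus(U_{n+1}^{(k)}\oplus U_{n+1}^{(k)})\subset U_n^{(k)}$, so the triple sum lies in $\bigcap_{k\le n+1}U_n^{(k)}\subset W_n$. Using that each chain decreases in $n$, a short computation gives $\bigcap_n W_n=\bigcap_k\bigcap_n U_n^{(k)}=\bigcap_k H_k=:H$, so $H$ is admissible. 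Moreover $H$ is a subgyrogroup as an intersection of subgyrogroups, and it is an $L$-subgyrogroup since for $h\in H\subset H_k$ the $L$-property of $H_k$ gives $\mbox{gyr}[a,h](H_k)=H_k$, whence $\mbox{gyr}[a,h](H)=\bigcap_k\mbox{gyr}[a,h](H_k)=H$ for all $a\in G$.

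The step I expect to be the main obstacle is the diagonal construction in part (c): arranging a single chain $\{W_n\}$ that simultaneously satisfies the self-absorbing condition $W_{n+1}\oplus(W_{n+1}\oplus W_{n+1})\subset W_n$ and meets the target identity $\bigcap_n W_n=\bigcap_k H_k$ exactly. The choice $W_n=\bigcap_{k\le n}U_n^{(k)}$ is what makes both hold at once, and checking the identity carefully (that tails of decreasing chains recover each $H_k$) is the one genuinely non-routine point. In part (a) the only subtlety is transporting the $G_\delta$ property of $0$ across the quotient, for which the openness of $\pi$ must be invoked.
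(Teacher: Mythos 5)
Your proposal is correct and follows essentially the same route as the paper: the same closure argument via $\overline{U_{n+1}}\subset U_{n+1}\oplus U_{n+1}\subset U_n$, the same recursive chain construction inside a given neighborhood for (b), and the same diagonal chain $W_n=\bigcap_{k\le n}U_n^{(k)}$ (the paper's $V_n=\bigcap_{i=0}^{n}U_{i,n}$) for (c). The only local differences are minor: in (a) you invoke homogeneity of $G/H$ instead of translating the $G_\delta$-point directly as the paper does, and in (c) you derive the $L$-property from the $L$-property of each $H_k$ rather than from $\mathscr U$-invariance of the generating chains, which in fact sidesteps the paper's implicit assumption that those chains can be taken inside $\mathscr U$.
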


\begin{proof}
(a) Suppose that $H$ is an admissible subgyrogroup of $G$ generated by $\mathscr U$. Then there exists a sequence $\{U_{n}:n\in \mathbb{N}\}$ of open symmetric neighborhoods of $0$ in $G$ such that $U_{n}\in \mathscr U$ and $U_{n+1}\oplus (U_{n+1}\oplus U_{n+1})\subset U_{n}$ for each $n\in \mathbb{N}$ and $H=\bigcap _{n\in \mathbb{N}}U_{n}$. Since $\overline{U_{n+1}}\subset U_{n+1}\oplus U_{n+1}\subset U_{n+1}\oplus (U_{n+1}\oplus U_{n+1})\subset U_{n}$ for each $n\in \mathbb{N}$, the intersection of the sets $U_{n}$ coincides with the intersection of their closures. Hence, $H$ is closed in $G$.

Let $\pi: G\rightarrow G/H$ be the quotient mapping of $G$ onto the left coset space $G/H$. For every $n\in \mathbb{N}$, we have $$\pi^{-1}(\pi (U_{n+1}))=U_{n+1}\oplus H\subset U_{n+1}\oplus U_{n+1}\subset U_{n}.$$ Therefore, the set $P=\bigcap _{n\in \mathbb{N}}\pi (U_{n})$ satisfies $$\pi^{-1}(P)=\bigcap _{n\in \mathbb{N}}\pi^{-1}\pi(U_{n+1})\subset \bigcap _{n\in \mathbb{N}}U_{n}=H.$$ Hence $\pi(P)=\{0\}.$ For any $a\in G$, we have $$\pi(a)\in\bigcap_{n\in \mathbb{N}}(\pi(a)\oplus \pi (U_{n}))=\pi(a)\oplus \bigcap_{n\in \mathbb{N}}\pi(U_{n})=\{\pi(a)\},$$ that is, $\{\pi(a)\}=\bigcap_{n\in \mathbb{N}}(\pi(a)\oplus \pi (U_{n}))$. Therefore, $G/H$ has countable pseudocharacter.

\smallskip
(b) Let $U$ be an arbitrary neighborhood of $0$ in $G$. Define a sequence $\{U_{n}: n\in \mathbb{N}\}$ of open symmetric neighborhoods of $0$ in $G$ such that $U_{0}\subset U$, $U_{n}\in \mathscr U$ and $U_{n+1}\oplus (U_{n+1}\oplus U_{n+1})\subset U_{n}$ for each $n\in \mathbb{N}$. Then $H=\bigcap _{n\in \mathbb{N}} U_{n}$ is an admissible $L$-subgyrogroup of $G$ and $H\subset U_{0}\subset U$.

\smallskip
(c) Let $\{H_{n}: n\in \mathbb{N}\}$ be a sequence of admissible $L$-subgyrogroups of $G$. For every $n\in \mathbb{N}$, we can find a sequence $\{U_{n, k}: k\in \mathbb{N}\}$ of open symmetric neighborhoods of $0$ in $G$ such that $U_{n, k}\in \mathscr U$ and $U_{n, k+1}\oplus (U_{n, k+1}\oplus U_{n, k+1})\subset U_{n, k}$ for each $k\in \mathbb{N}$ and $H_{n}=\bigcap _{k\in \mathbb{N}}U_{n, k}$. Consider the sequence $\{V_{n}: n\in \mathbb{N}\}$, where $V_{n}=\bigcap _{i=0}^{n}U_{i, n}$ for each $n\in \mathbb{N}$. Clearly, every $V_{n}$ is an open symmetric neighborhood of $0$ in $G$. In addition, whenever $i\leq n$, $$V_{n+1}\oplus (V_{n+1}\oplus V_{n+1})\subset U_{i, n+1}\oplus (U_{i, n+1}\oplus U_{i, n+1})\subset U_{i, n}.$$ So, for all $n\in \mathbb{N}$, $$V_{n+1}\oplus (V_{n+1}\oplus V_{n+1})\subset \bigcap _{i=0}^{n}U_{i, n}=V_{n}.$$ What's more, for every $x, y\in G$,
\begin{eqnarray}
\mbox{gyr}[x, y](H)&=&\mbox{gyr}[x, y](\bigcap _{n\in \mathbb{N}}V_{n})\nonumber\\
&=&\mbox{gyr}[x, y](\bigcap _{n\in \mathbb{N}}\bigcap _{i=0}^{n}U_{i, n})\nonumber\\
&\subset &\bigcap _{n\in \mathbb{N}}\bigcap _{i=0}^{n}\mbox{gyr}[x, y](U_{i, n})\nonumber\\
&=&\bigcap _{n\in \mathbb{N}}\bigcap _{i=0}^{n}U_{i, n}\nonumber\\
&=&\bigcap _{n\in \mathbb{N}}V_{n}\nonumber\\
&=&H.\nonumber
\end{eqnarray}
By \cite[Proposition 2.6]{ST}, we have $\mbox{gyr}[x, y](H)=H$ for all $x, y\in G$. Moreover, $$H=\bigcap _{n\in \mathbb{N}}V_{n}=\bigcap _{n\in \mathbb{N}}\bigcap _{i=0}^{n}U_{i, n}=\bigcap _{i\in \mathbb{N}}\bigcap _{n=i}^{\infty }U_{i, n}=\bigcap _{i\in \mathbb{N}}H_{i}.$$ Therefore, $\bigcap _{i\in \mathbb{N}}H_{i}$ is an admissible $L$-subgyrogroup.
\end{proof}

A subset $B$ of a space $X$ is said to be
{\it bounded} in $X$ if every continuous real-valued function on $X$ is bounded
on $B$.

\begin{lemma}\cite{AA}\label{3yl2}
A subset $B$ of a Tychonoff space $X$ is bounded in $X$ iff for every locally finite family $\gamma$ of open sets in $X$, the set $B$ meets only finitely many elements of $\gamma$.
\end{lemma}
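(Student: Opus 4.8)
The plan is to prove both implications by contraposition, leaning on the two defining features of the setting: complete regularity of $X$ (to manufacture continuous functions) and the fact that preimages of locally finite families under continuous maps remain locally finite. Throughout I would read ``$B$ is bounded in $X$'' as ``every continuous $f\colon X\to\mathbb{R}$ is bounded on $B$,'' and I would reduce each direction to producing, from the failure of one side, an explicit witness for the failure of the other.

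For the forward implication (bounded $\Rightarrow$ meets only finitely many) I would argue contrapositively. Suppose a locally finite family $\gamma$ of open sets meets $B$ in infinitely many of its members; then I would choose countably many distinct members $U_{n}\in\gamma$ with $B\cap U_{n}\neq\emptyset$ and pick $x_{n}\in B\cap U_{n}$. Since $X$ is Tychonoff, for each $n$ I would select a continuous $f_{n}\colon X\to[0,1]$ with $f_{n}(x_{n})=1$ and $f_{n}$ vanishing off $U_{n}$, and then set $f=\sum_{n\in\mathbb{N}} n\, f_{n}$. Local finiteness of the subfamily $\{U_{n}\}$ guarantees that near any point only finitely many summands are nonzero, so $f$ is a locally finite sum of continuous functions and hence continuous; as $f(x_{n})\ge n\, f_{n}(x_{n})=n$ with each $x_{n}\in B$, the function $f$ is unbounded on $B$, contradicting boundedness.

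For the reverse implication I would again use contraposition: assume $B$ is not bounded, so some continuous $f\colon X\to\mathbb{R}$ is unbounded on $B$. I would then pull back the family $\{(n-1,n+1):n\in\mathbb{Z}\}$, which is a locally finite open cover of $\mathbb{R}$, to obtain $\{f^{-1}((n-1,n+1)):n\in\mathbb{Z}\}$. Continuity makes each member open, and local finiteness transfers along $f$, since a neighbourhood of $f(x)$ meeting finitely many of the intervals pulls back to a neighbourhood of $x$ meeting finitely many of the preimages; hence this pulled-back family is locally finite in $X$. Because $f$ is unbounded on $B$, the image $f(B)$ meets infinitely many of the intervals, whence $B$ meets infinitely many members of a locally finite family, which is exactly the negation of the right-hand condition.

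I expect the only delicate point to be the continuity of the infinite sum $f=\sum_{n} n\, f_{n}$ in the forward direction: this is precisely where local finiteness is indispensable, for without it the sum need not even be well defined, let alone continuous. I would make this rigorous by noting that each $p\in X$ has a neighbourhood $W$ meeting only finitely many $U_{n}$, and that $f_{n}$ vanishes on $W$ whenever $U_{n}\cap W=\emptyset$, so $f$ coincides on $W$ with a finite partial sum and is therefore continuous there; continuity being a local property, $f$ is continuous on all of $X$. The remainder is routine bookkeeping with the complete-regularity separating functions.
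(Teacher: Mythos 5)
Your proof is correct. Note that the paper itself gives no proof of this lemma --- it is quoted directly from the reference [AA] (Arhangel'ski\v\i--Tkachenko) --- and your argument is essentially the standard one: for one direction, complete regularity supplies Urysohn-type functions $f_{n}$ supported in the $U_{n}$, and local finiteness makes $\sum_{n} n f_{n}$ a well-defined continuous function unbounded on $B$; for the other, pulling back the locally finite open cover $\{(n-1,n+1): n\in\mathbb{Z}\}$ of $\mathbb{R}$ along an unbounded continuous function produces the required witness family. Both directions, including the local-finiteness justification of continuity of the sum, are sound as written.
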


\begin{theorem}
Suppose that $(G, \tau, \oplus)$ is a strongly topological gyrogroup with a symmetric neighborhood base $\mathscr U$ at $0$. Suppose further that $B$ is a subset of $G$ such that the set $\pi _{H}(B)$ is bounded in the quotient space $G/H$ for every admissible $L$-subgyrogroup $H$ of $G$, where $\pi _{H}:G\rightarrow G/H$ is the quotient mapping. Then $B$ is bounded in $G$.
\end{theorem}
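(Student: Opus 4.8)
The plan is to argue by contradiction and, from a witness to the unboundedness of $B$ in $G$, to manufacture a single admissible $L$-subgyrogroup $H$ for which $\pi_{H}(B)$ fails to be bounded in $G/H$, contradicting the hypothesis. First I would record that $G$ is Tychonoff: by Theorem~\ref{t1} it is completely regular, and being $T_{0}$ it is then Tychonoff, so Lemma~\ref{3yl2} applies to $G$. Assuming $B$ is not bounded, Lemma~\ref{3yl2} yields a locally finite family of open sets that $B$ meets infinitely often; passing to a countable subfamily I fix open sets $\{O_{n}:n\in\mathbb{N}\}$, still locally finite, together with points $b_{n}\in B\cap O_{n}$.

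Next I would build the subgyrogroup by a recursion that simultaneously guarantees the cube condition and enough smallness. For each $n$, continuity of $\oplus$ together with $b_{n}\in O_{n}$ gives a symmetric open neighborhood $V_{n}$ of $0$ with $b_{n}\oplus(V_{n}\oplus V_{n})\subseteq O_{n}$. Interleaving with this, I choose $U_{n}\in\mathscr{U}$ symmetric with $U_{n}\subseteq V_{n}$ and $U_{n+1}\oplus(U_{n+1}\oplus U_{n+1})\subseteq U_{n}$, which is possible because $\mathscr{U}$ is a neighborhood base and by iterating the neighborhood arithmetic of Proposition~\ref{m} (pick a symmetric $W$ with $W\oplus(W\oplus W)\subseteq U_{n-1}\cap V_{n}$ and then $U_{n}\in\mathscr{U}$ inside $W$). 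Setting $H=\bigcap_{n}U_{n}$, this is exactly the construction of Lemma~\ref{3yl1}(b), so $H$ is an admissible $L$-subgyrogroup of $G$; in particular $\mbox{gyr}[x,y](H)=H$ for all $x,y\in G$, and $H\subseteq U_{n}\subseteq V_{n}$ for every $n$.

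The heart of the argument is then to show that the images $G_{n}:=\pi_{H}(b_{n}\oplus V_{n})$ form a locally finite family of open sets in $G/H$ which $\pi_{H}(B)$ meets infinitely often. Since $\pi_{H}$ is open (so the $G_{n}$ are open, by \cite[Theorem~3.7]{BL}) and $H$ is gyr-invariant, the saturation computation $(b_{n}\oplus V_{n})\oplus H=b_{n}\oplus(V_{n}\oplus H)\subseteq b_{n}\oplus(V_{n}\oplus V_{n})\subseteq O_{n}$ gives $\pi_{H}^{-1}(G_{n})\subseteq O_{n}$. For local finiteness, fix $\pi_{H}(x)\in G/H$; local finiteness of $\{O_{n}\}$ furnishes an open $W\ni x$ meeting only finitely many $O_{n}$, and I choose $k$ with $x\oplus(U_{k}\oplus U_{k})\subseteq W$. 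Using again $(x\oplus U_{k})\oplus H=x\oplus(U_{k}\oplus H)\subseteq x\oplus(U_{k}\oplus U_{k})\subseteq W$, any $G_{n}$ meeting the basic neighborhood $\pi_{H}(x\oplus U_{k})$ forces $W$ to meet $O_{n}$, so only finitely many $G_{n}$ qualify. Thus $\{G_{n}\}$ is locally finite; since $\pi_{H}(b_{n})\in G_{n}\cap\pi_{H}(B)$ and a locally finite family cannot repeat a nonempty member infinitely often, $\pi_{H}(B)$ meets infinitely many distinct $G_{n}$.

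Finally, because $G/H$ has countable pseudocharacter by Lemma~\ref{3yl1}(a), the argument of Theorem~\ref{dl3} applied to the prenorm of Lemma~\ref{s} (which descends to $G/H$, as it is gyr-invariant and vanishes exactly on $H$) shows $G/H$ is submetrizable, hence Tychonoff; Lemma~\ref{3yl2} applied to $G/H$ then shows $\pi_{H}(B)$ is not bounded in $G/H$, contradicting the hypothesis for this particular $H$. I expect the main obstacle to be two-fold: organizing the single recursion so that $H$ is simultaneously an admissible $L$-subgyrogroup and small enough that $\pi_{H}^{-1}(G_{n})\subseteq O_{n}$, and carrying the transfer of local finiteness through the non-associative setting, where every reduction of $(\cdots)\oplus H$ to $\cdots\oplus(\cdots\oplus H)$ rests on the gyr-invariance $\mbox{gyr}[x,y](H)=H$ inherited from $U_{n}\in\mathscr{U}$. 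Securing that $G/H$ is Tychonoff is the remaining point that must be in place before Lemma~\ref{3yl2} can be invoked on the quotient.
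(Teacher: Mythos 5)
Your overall architecture is the same as the paper's: argue by contradiction, extract a locally finite family $\{O_{n}\}$ of open sets met by $B$ together with points $b_{n}\in B\cap O_{n}$, build one admissible $L$-subgyrogroup $H$ lying inside all the auxiliary neighborhoods, verify the saturation inclusion $\pi_{H}^{-1}(\pi_{H}(\mbox{small neighborhood of } b_{n}))\subseteq O_{n}$, use openness of $\pi_{H}$ from \cite[Theorem 3.7]{BL} to push the family down to $G/H$, and contradict boundedness of $\pi_{H}(B)$ via Lemma~\ref{3yl2}. Your variations are harmless: the single interleaved recursion producing $H$ just inlines Lemma~\ref{3yl1}(b) where the paper invokes parts (b) and (c) of Lemma~\ref{3yl1} to intersect subgyrogroups $H_{n}\subseteq W_{n}$, and your use of gyr-invariance to rewrite $(b_{n}\oplus V_{n})\oplus H=b_{n}\oplus(V_{n}\oplus H)$ replaces the paper's simpler choice $(x_{n}\oplus W_{n})\oplus W_{n}\subseteq V_{n}$; your remark that a nonempty member of a locally finite indexed family cannot repeat infinitely often is a point of care the paper glosses over. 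However, your local-finiteness transfer contains a step that fails: having fixed $W\ni x$ meeting only finitely many $O_{n}$, you ``choose $k$ with $x\oplus(U_{k}\oplus U_{k})\subseteq W$''. No such $k$ need exist. The sets $U_{k}$ are not a neighborhood base at $0$; they only decrease to $H=\bigcap_{k}U_{k}$, so $x\oplus(U_{k}\oplus U_{k})\supseteq x\oplus H$ for every $k$, and nothing forces the (possibly large, non-compact) coset $x\oplus H$ to lie inside $W$ --- local finiteness at the single point $x$ gives no control over the whole coset, and when $H$ is nontrivial one can be stuck with a $W$ that excludes points of $x\oplus H$. The repair is immediate from tools you already have, and is exactly what the paper does implicitly: since $\pi_{H}$ is open, $\pi_{H}(W)$ is itself an open neighborhood of $\pi_{H}(x)$, and if $\pi_{H}(W)$ meets $G_{n}$ then some $w\in W$ lies in $\pi_{H}^{-1}(G_{n})\subseteq O_{n}$, so $W$ meets $O_{n}$; hence $\pi_{H}(W)$ meets only finitely many $G_{n}$, and no ``basic'' neighborhoods are needed.

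A second incorrect step is ``submetrizable, hence Tychonoff''. Submetrizability gives a continuous bijection onto a metrizable space, hence Hausdorffness (even functional Hausdorffness), but it does not give regularity, let alone complete regularity: the real line with the set of rationals declared open is submetrizable but not regular. So your verification that Lemma~\ref{3yl2} may be applied in $G/H$ does not go through as written. You are right that something must be said here, because the direction of Lemma~\ref{3yl2} you use in the quotient (bounded implies meeting only finitely many members) is precisely the direction whose proof requires complete regularity, to manufacture a continuous real-valued function unbounded on $\pi_{H}(B)$. Note that the paper itself is silent on this point --- it applies Lemma~\ref{3yl2} to $G/H$ without checking that the coset space is Tychonoff --- so your instinct to address the hypothesis is sound, but the justification offered is invalid; closing it would require an actual proof that $G/H$ is completely regular (for instance via a family of gyr-invariant prenorms on $G$ that descend to $G/H$, in the spirit of the paper's final theorem), not an appeal to submetrizability.
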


\begin{proof}
Suppose to the contrary that $B$ is not bounded in $G$. Then, by Lemma \ref{3yl2}, there exists a locally finite family $\gamma =\{V_{n}:n\in \mathbb{N}\}$ of open sets in $G$ such that $B$ meets every $V_{n}$. For every $n\in \mathbb{N}$, choose a point $x_{n}\in B\cap V_{n}$ and an open neighborhoof $W_{n}$ of the identity $0$ such that $W_{n}\in \mathscr U$ and $(x_{n}\oplus W_{n})\oplus W_{n}\subset V_{n}$. According to (b) and (c) of Lemma \ref{3yl1}, $W_{n}$ contains an admissible $L$-subgyrogroup $H_{n}$ of $G$, and $H=\bigcap _{n\in \mathbb{N}}H_{n}$ is also an admissible $L$-subgyrogroup of $G$. Let $\pi :G\rightarrow G/H$ be the quotient mapping. Then, for each $n\in \mathbb{N}$, $$\pi ^{-1}(\pi (x_{n}\oplus W_{n}))=(x_{n}\oplus W_{n})\oplus H\subset (x_{n}\oplus W_{n})\oplus H_{n}\subset (x_{n}\oplus W_{n})\oplus W_{n}\subset V_{n}.$$ By \cite[Theorem 3.7]{BL}, the mapping $\pi$ is open. Then since $\gamma$ is locally finite in $G$, it follows that the family $\{\pi (x_{n}\oplus W_{n}):n\in \mathbb{N}\}$ of open sets is locally finite in $G/H$. It is clear that $\pi (B)$ meets every element of this family, so that $\pi (B)$ is not bounded in $G/H$ by Lemma \ref{3yl2}.
\end{proof}

\begin{theorem}
Suppose that $G$ is a strongly topological gyrogroup with the symmetric neighborhood base $\mathscr{U}$ at $0$, $H$ is an admissible $L$-subgyrogroup of $G$ generated from $\mathscr U$, then the left coset space $G/H$ is submetrizable.
\end{theorem}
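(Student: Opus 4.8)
The plan is to reuse the prenorm machinery of Theorem~\ref{dl3}, but to arrange matters so that the prenorm vanishes on all of $H$ rather than only at $0$. Note that we cannot simply invoke Theorem~\ref{dl3} for $G/H$, since $H$ is only an $L$-subgyrogroup and $G/H$ is merely a homogeneous coset space, not a topological gyrogroup. Instead I would build a \emph{pseudometric} on $G$ whose zero-set is exactly the partition into left cosets and then push it down to a genuine metric on $G/H$. Concretely, since $H$ is admissible and generated from $\mathscr U$, there is a sequence $\{U_{n}:n\in\mathbb N\}\subset\mathscr U$ with $U_{n+1}\oplus(U_{n+1}\oplus U_{n+1})\subset U_{n}$ and $H=\bigcap_{n}U_{n}$. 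Because $0\in U_{n+1}$, this yields $U_{n+1}\oplus U_{n+1}\subset U_{n}$, so the sequence satisfies conditions (1)--(2) of Lemma~\ref{s}; applying that lemma produces a continuous prenorm $N$ on $G$ that is gyration-invariant, $N(\mbox{gyr}[x,y](z))=N(z)$, and satisfies $\{x:N(x)<1/2^{n}\}\subset U_{n}\subset\{x:N(x)\le 2/2^{n}\}$ for every $n$.

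The first genuinely new point is to read off that $N^{-1}(0)=H$. If $h\in H=\bigcap_{n}U_{n}$ then $h\in U_{n}$ for all $n$, so $N(h)\le 2/2^{n}$ for all $n$ and hence $N(h)=0$; conversely $N(x)=0$ forces $x\in\{y:N(y)<1/2^{n}\}\subset U_{n}$ for every $n$, so $x\in H$. (Consistently, Lemma~\ref{3yl1}(a) already tells us $H$ is closed and $G/H$ has countable pseudocharacter.)

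Next I would set $\varrho_{N}(x,y)=N(\ominus x\oplus y)+N(\ominus y\oplus x)$, exactly as in Theorem~\ref{dl3}. Non-negativity, symmetry, $\varrho_{N}(x,x)=0$, and the triangle inequality all transcribe verbatim, the triangle inequality again using the gyration identity of \cite[Theorem 2.11]{UA2005} together with the gyration-invariance of $N$; the only change is that $\varrho_{N}$ is now merely a pseudometric. The crucial observation is that its zero-set is the coset relation: by Proposition~\ref{a}(1), $N(\ominus x\oplus y)=0$ means $\ominus x\oplus y\in H$, i.e. $y=x\oplus(\ominus x\oplus y)\in x\oplus H$, which by the partition property of $\{a\oplus H:a\in G\}$ is equivalent to $x\oplus H=y\oplus H$ (and this in turn forces $\ominus y\oplus x\in H$). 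Thus $\varrho_{N}(x,y)=0$ if and only if $x$ and $y$ lie in the same fiber of $\pi$.

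Since $\varrho_{N}$ is a pseudometric, the triangle inequality makes it constant on pairs of cosets, so it descends to a well-defined $\bar\varrho$ on $G/H$, and the previous step shows $\bar\varrho$ separates distinct cosets; hence $\bar\varrho$ is a metric on $G/H$. It remains to verify that the metric topology is weaker than $\tau(G/H)$. Here I would use that $\pi$ is, by the very definition of $\tau(G/H)$, a quotient map: the identity map $(G/H,\tau(G/H))\to(G/H,\bar\varrho)$ is continuous iff its composite with $\pi$, namely $x\mapsto x\oplus H\in(G/H,\bar\varrho)$, is continuous, and the latter holds because $\bar\varrho(x\oplus H,x_{0}\oplus H)=\varrho_{N}(x,x_{0})$ while $\varrho_{N}(\cdot,x_{0})$ is continuous on $G$, being assembled from the continuous $N$ and the continuous operations $\oplus,\ominus$. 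This identity map is then a continuous bijection of $G/H$ onto the metric space $(G/H,\bar\varrho)$, so $G/H$ is submetrizable. The computational core (the triangle inequality) is inherited from Theorem~\ref{dl3}, so I expect the main obstacle to be the bookkeeping that genuinely distinguishes the quotient from the group-like case: confirming that the zero-set of $\varrho_{N}$ is \emph{precisely} the coset partition, so that the pseudometric quotient is literally $G/H$, and then threading the continuity of $\bar\varrho$ through the quotient map rather than through $G$ directly.
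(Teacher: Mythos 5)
Your proposal is correct and follows the same skeleton as the paper's proof: the admissible sequence $\{U_{n}\}$ is fed into Lemma~\ref{s}, the resulting gyration-invariant continuous prenorm $N$ satisfies $N^{-1}(0)=H$, the distance on cosets is the same expression $N(\ominus x\oplus y)+N(\ominus y\oplus x)$, and the triangle inequality is checked by exactly the computation of Theorem~\ref{dl3}. Where you genuinely diverge is in the two bookkeeping steps, and in both places your version is tighter. For well-definedness on $G/H$, the paper first proves the invariance $N(x\oplus h)=N(x)$ for $h\in H$ (using gyration invariance and the $L$-subgyrogroup property), introduces the auxiliary pseudometric $d(x,y)=|N(x)-N(y)|$, shows $d$ is constant on pairs of cosets, and then declares $\varrho(\pi_{H}(x),\pi_{H}(y))=d(\ominus x\oplus y,0)+d(\ominus y\oplus x,0)$ well defined; strictly speaking the coset-constancy of $d$ is not the statement actually needed (one needs $N(\ominus x'\oplus y')=N(\ominus x\oplus y)$ for $x'\in x\oplus H$, $y'\in y\oplus H$, which takes a further gyration computation). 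Your route --- establish that $\varrho_{N}(x,y)=0$ exactly when $x\oplus H=y\oplus H$, then quotient the pseudometric by its zero-set using only the triangle inequality --- needs no right-$H$-invariance of $N$ at all and is self-contained. Similarly for the topology comparison: the paper asserts the ball identity $B(x,\varepsilon)=\pi_{H}^{-1}(B^{*}(\pi_{H}(x),\varepsilon))$ for the $d$-ball, which as stated is only an inclusion ($\pi_{H}^{-1}(B^{*})\subset B$, and the $d$-ball can be strictly larger), whereas your argument through the universal property of the quotient topology $\tau(G/H)$ and the continuity of $x\mapsto\varrho_{N}(x,x_{0})$ yields the coarseness claim directly and correctly. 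What the paper's extra step buys is the explicit invariance $N(x\oplus h)=N(x)$, a fact of some independent interest; what yours buys is brevity and the avoidance of the two most fragile steps of the published argument.
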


\begin{proof}
Let $\{U_{n}:n\in \mathbb{N}\}$ be a sequence of symmetric open neighborhoods of the identity $0$ in $G$ satisfying $U_{n}\in \mathscr U$ and $U_{n+1}\oplus (U_{n+1}\oplus U_{n+1})\subset U_{n}$, for each $n\in \mathbb{N}$, and such that $H=\bigcap _{n\in \mathbb{N}}U_{n}$. By Lemma \ref{s}, there exists a continuous prenorm $N$ on $G$ which satisfies $$N(\mbox{gyr}[x,y](z))=N(z)$$ for any $x, y, z\in G$ and $$\{x\in G: N(x)<1/2^{n}\}\subset U_{n}\subset\{x\in G: N(x)\leq 2/2^{n}\},$$ for any $n\in \mathbb{N}$.

First, we show that $N(x)=0$ if and only if $x\in H$. If $N(x)=0$, then $$x\in \bigcap _{n\in \mathbb{N}}\{x\in G:N(x)<1/2^{n}\}\subset \bigcap _{n\in \mathbb{N}}U_{n}=H.$$ On the other hand, if $x\in H$, since $$H=\bigcap _{n\in \mathbb{N}}U_{n}\subset \bigcap _{n\in \mathbb{N}}\{x\in G:N(x)\leq 2/2^{n}\},$$ we have that $N(x)=0$.

We claim that $N(x\oplus h)=N(x)$ for every $x\in G$ and $h\in H$. Indeed, for every $x\in G$ and $h\in H$, $N(x\oplus h)\leq N(x)+N(h)=N(x)+0=N(x)$. Moreover, by the definition of $N$, we observe that $N(\mbox{gyr}[x,y](z))=N(z)$ for every $x,y,z\in G$. Since $H$ is a $L$-subgyrogroup, it follows that
\begin{eqnarray}
N(x)&=&N((x\oplus h)\oplus \mbox{gyr}[x,h](\ominus h))\nonumber\\
&\leq&N(x\oplus h)+N(\mbox{gyr}[x,h](\ominus h))\nonumber\\
&=&N(x\oplus h)+N(\ominus h)\nonumber\\
&=&N(x\oplus h).\nonumber
\end{eqnarray}
Therefore, $N(x\oplus h)=N(x)$ for every $x\in G$ and $h\in H$.

Now define a function $d$ from $G\times G$ to $\mathbb{R}$ by $d(x,y)=|N(x)-N(y)|$ for all $x,y\in G$. Obviously, $d$ is continuous. We show that $d$ is a pseudometric.

\smallskip
(1) For any $x, y\in G$, if $x=y$, then $d(x, y)=|N(x)-N(y)|=0$.

\smallskip
(2) For any $x, y\in G$, $d(y, x)=|N(y)-N(x)|=|N(x)-N(y)|=d(x, y)$.

\smallskip
(3) For any $x, y, z\in G$, we have
\begin{eqnarray}
d(x, y)&=&|N(x)-N(y)|\nonumber\\
&=&|N(x)-N(z)+N(z)-N(y)|\nonumber\\
&\leq&|N(x)-N(z)|+|N(z)-N(y)|\nonumber\\
&=&d(x, z)+d(z, y).\nonumber
\end{eqnarray}

If $x'\in x\oplus H$ and $y'\in y\oplus H$, then there exist $h_{1},h_{2}\in H$ such that $x'=x\oplus h_{1}$ and $y'=y\oplus h_{2}$, then $$d(x', y')=|N(x\oplus h_{1})-N(y\oplus h_{2})|=|N(x)-N(y)|=d(x, y).$$ This enables us to define a function $\varrho $ on $G/H\times G/H$ by $$\varrho (\pi _{H}(x),\pi _{H}(y))=d(\ominus x\oplus y, 0)+d(\ominus y\oplus x, 0)$$ for any $x, y\in G$.

It is obvious that $\varrho $ is continuous, and we verify that $\varrho $ is a metric on $G/H$.

\smallskip
(1) Obviously, for any $x, y\in G$, then
\begin{eqnarray}
\varrho (\pi _{H}(x),\pi _{H}(y))=0&\Leftrightarrow&d(\ominus x\oplus y, 0)=d(\ominus y\oplus x, 0)=0\nonumber\\
&\Leftrightarrow&N(\ominus x\oplus y)=N(\ominus y\oplus x)=0\nonumber\\
&\Leftrightarrow&\ominus x\oplus y\in H\ \mbox{and}\ \ominus y\oplus x\in H\nonumber\\
&\Leftrightarrow&y\in x\oplus H\ \mbox{and}\ x\in y\oplus H\nonumber\\
&\Leftrightarrow&\pi _{H}(x)=\pi _{H}(y).\nonumber
\end{eqnarray}

\smallskip
(2) For every $x,y\in G$, it is obvious that $\varrho (\pi _{H}(y), \pi _{H}(x))=\varrho (\pi _{H}(x),\pi _{H}(y))$.

\smallskip
(3) For every $x, y, z\in G$, it follows from \cite[Theorem 2.11]{UA2005} that
\begin{eqnarray}
\varrho (\pi _{H}(x),\pi _{H}(y))&=&N(\ominus x\oplus y)+N(\ominus y\oplus x)\nonumber\\
&=&N((\ominus x\oplus z)\oplus \mbox{gyr}[\ominus x,z](\ominus z\oplus y))\nonumber\\
&&+N((\ominus y\oplus z)\oplus \mbox{gyr}[\ominus y,z](\ominus z\oplus x))\nonumber\\
&\leq&N(\ominus x\oplus z)+N(\mbox{gyr}[\ominus x,z](\ominus z\oplus y))\nonumber\\
&&+N(\ominus y\oplus z)+N(\mbox{gyr}[\ominus y,z](\ominus z\oplus x))\nonumber\\
&=&N(\ominus x\oplus z)+N(\ominus z\oplus y)+N(\ominus y\oplus z)+N(\ominus z\oplus x)\nonumber\\
&=&d(\ominus x\oplus z, 0)+d(\ominus z\oplus x, 0)+d(\ominus z\oplus y, 0)+d(\ominus y\oplus z, 0)\nonumber\\
&=&\varrho (\pi _{H}(x),\pi _{H}(z))+\varrho (\pi _{H}(z),\pi _{H}(y)).\nonumber
\end{eqnarray}

Given any points $x\in G$, $y\in G/H$ and any $\varepsilon >0$, we define open balls, $$B(x, \varepsilon)=\{x'\in G: d(x',x)<\varepsilon\}$$ and $$B^{*}(y, \varepsilon)=\{y'\in G/H: \varrho (y',y)<\varepsilon\}$$ in $G$ and $G/H$, respectively. Obviously, if $x\in G$ and $y=\pi _{H}(x)$, then we have $B(x, \varepsilon)=\pi ^{-1}_{H}(B^{*}(y, \varepsilon))$. Therefore, the topology generated by $\varrho$ on $G/H$ is coarser than the quotient topology on $G/H$ and the space $G/H$ is submetrizable.
\end{proof}

Since every completely regular submetrizable space is Dieudonn\'{e} complete, we have the following corollary.

\begin{corollary}
Every $T_{0}$-strongly topological gyrogroup with a countable pseudocharacter is Dieudonn\'{e} complete.
\end{corollary}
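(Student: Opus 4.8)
The plan is to deduce the statement directly from the two main theorems of the paper, together with the standard topological fact quoted immediately before it. Since Dieudonn\'{e} completeness is a property of Tychonoff spaces, the first step is to check that $G$ is Tychonoff. By Theorem \ref{t1}, every $T_{0}$-strongly topological gyrogroup is completely regular; combining this with the standing $T_{0}$ assumption (a completely regular $T_{0}$ space is $T_{1}$, hence Tychonoff) shows that $G$ is a completely regular Hausdorff space.

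The second step is to produce a weaker metrizable topology. The hypothesis that $G$ has countable pseudocharacter is exactly what is needed to apply Theorem \ref{dl3}: invoking it for $G$ with its symmetric neighborhood base $\mathscr U$ at $0$ yields that $G$ is submetrizable, that is, there is a continuous one-to-one mapping of $G$ onto a metrizable space.

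Finally, I would appeal to the fact recorded just above the statement, namely that every completely regular submetrizable space is Dieudonn\'{e} complete. Since $G$ has now been shown to be simultaneously completely regular and submetrizable, this fact applies verbatim and gives the desired conclusion. I do not anticipate any genuine obstacle, as both ingredients are already established in the paper; the only point requiring care is to confirm that the hypotheses of the corollary feed correctly into Theorems \ref{t1} and \ref{dl3} -- the $T_{0}$ assumption into the former and the countable-pseudocharacter assumption into the latter -- and this matching is immediate.
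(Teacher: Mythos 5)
Your proposal is correct and follows exactly the paper's intended argument: Theorem \ref{t1} gives complete regularity, Theorem \ref{dl3} gives submetrizability, and the quoted fact that every completely regular submetrizable space is Dieudonn\'{e} complete finishes the proof. The only addition you make beyond the paper's one-line deduction is the (welcome) explicit check that the hypotheses of the two theorems are satisfied.
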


\end{document}